\newtheorem{theorem}{Theorem}[section]
\newtheorem{lemma}[theorem]{Lemma}
\newtheorem{proposition}[theorem]{Proposition}
\theoremstyle{definition}
\newtheorem{definition}[theorem]{Definition}
\theoremstyle{remark}
\newtheorem{remark}[theorem]{Remark}
\numberwithin{equation}{section}
\DeclareMathOperator{\Id}{Id}
\DeclareMathOperator{\dimH}{dim_H}
\newcommand{\spt}{\text{spt}\, }
\begin{document}

\title[Non-degenerate families of projections]
{Hausdorff dimension and non-degenerate families of projections}

\author[E. J\"arvenp\"a\"a]{Esa J\"arvenp\"a\"a$^1$}
\address{Department of Mathematical Sciences,  P.O. Box 3000,
         90014 University of Oulu, Finland$^{1,2}$}
\email{esa.jarvenpaa@oulu.fi$^1$}

\author[M. J\"arvenp\"a\"a]{Maarit J\"arvenp\"a\"a$^2$}
\email{maarit.jarvenpaa@oulu.fi$^2$}

\author[T. Keleti]{Tam\'as Keleti$^3$}
\address{Department of Analysis, E\"otv\"os Lor\'and University, P\'azm\'any 
        P\'eter s\'et\'any 1/C, H-1117 Budapest, Hungary$^3$}
\email{elek@cs.elte.hu$^3$}

\thanks{We acknowledge the support of the Centre of Excellence in Analysis and 
Dynamics Research funded by the Academy of Finland. The third author was also
supported by OTKA grant no. 72655 and J\'anos Bolyai Fellowship.}

\subjclass[2000]{28A80, 37C45}
\keywords{Projection, Hausdorff dimension, measure}

\begin{abstract} 
We study parametrized families of orthogonal projections for which the 
dimension of the parameter space is strictly less than that of the Grassmann 
manifold. We answer the natural question of how much the Hausdorff 
dimension may decrease by verifying the best possible lower
bound for the dimension of almost all projections of a finite measure. We also 
show that a similar result is valid for smooth families of maps from
$n$-dimensional Euclidean space to $m$-dimensional one.  
\end{abstract}
\maketitle

\section{Introduction}\label{intro}

The behaviour of different concepts of dimensions of sets and measures under 
projections
has been investigated intensively for several decades. The study was initiated
by Marstrand \cite{Mar} in the 1950's. Mattila \cite{Mat1} considered 
Hausdorff dimension of sets in the 1970's, and in the late 1980's and
in the 1990's several authors contributed to the field. In 2000 Peres and
Schlag \cite{PS} proved a very general result concerning transversal families
of mappings and Sobolev dimension. For a more detailed account of the history,
see the survey of Mattila \cite{Mat3}. 

All the above results concerning Hausdorff dimension may be simplified 
by stating that the dimension is preserved under almost all projections. The 
essential assumption is transversality which is guaranteed in many cases by 
identifying the parameter space with an 
open subset of the Grassmann manifold. The question we are addressing is that
how much the dimension may drop under almost all projections provided that
the dimension of the parameter space is less than that of the Grassmann
manifold. The following conclusion can be drawn from \cite{PS}:
Fubini's theorem implies that for a given set or
a measure the dimension is preserved for almost all projections in almost all
$k$-dimensional families for any $k$. Hence, for a given measure
one obtains information for typical families. However, in general
there is no way to conclude whether a given family is typical for a given 
measure. Furthermore, the results of \cite{PS} concerning exceptional sets
of parameters may be applied if $k$ is large enough but the bounds obtained in 
this way are not optimal except in a few special cases (see Remark 
\ref{PSremark}).

The aforementioned question appears naturally in applications. For 
example, the study of projections of measures invariant under the geodesic
flow on $n$-dimensional Riemann manifolds leads to a 1-dimensional family
of projections from a $2(n-1)$-dimensional space onto an $(n-1)$-dimensional
space (see \cite{LL,JJL}). 
Another interesting example
is Falconer's \cite{F2} attempt to prove that there are no Besicovitch 
$(n,m)$-sets for $m\ge 2$. A set $A\subset\mathbb R^n$ is a Besicovitch 
$(n,m)$-set if the $n$-dimensional Lebesgue measure of $A$ is zero
and $A$ contains a translate of every $m$-dimensional 
linear subspace of $\mathbb R^n$. There is a gap in the proof related 
to the issue of the behaviour of the dimension under a $k$-dimensional family
of projections onto $m$-planes where $k$ is less than the dimension of the 
Grassmann manifold $G(n,m)$. 

To obtain results for almost all projections it is not sufficient to assume 
that the projection family is smooth since it is possible to parametrize
exceptional projections with many parameters. To prevent this from happening,
we assume that the family is locally embeddable into the Grassmann manifold 
guaranteeing that the mapping is changed when the parameter is changed. 

The cases of 1-dimensional families of projections onto $m$-planes and 
general families of projections onto lines or hyper-planes are dealt in
\cite{JJLL}. In this paper we solve completely the general case by 
proving the best possible almost sure lower bound in a 
$k$-dimensional family of projections onto $m$-planes in $\mathbb R^n$ (see 
Theorem~\ref{projresult}). We also verify that the corresponding 
result is valid for 
parametrized families of smooth maps between $\mathbb R^n$ and $\mathbb R^m$
(see Theorem~\ref{generalresult}). 

When applying our result to the setting of \cite{F2} we observe that the
dimension of the parameter space is too small to obtain the desired result
except in the case of Besicovitch $(n,n-1)$-sets. Since our result is the best 
possible one for general families, this means that if
there is a way to fix the gap in \cite{F2} for Besicovitch $(n,m)$-sets with 
$m<n-1$, one needs to utilize the special properties
of the projection family constructed in \cite{F2}.
   
The paper is organized as follows. In Section~\ref{trans} we give the basic
definitions and the auxiliary results needed later. Our main theorem 
concerning families of projections is verified in Section 
\ref{projfamilies} and generalized to families of 
smooth maps in Section~\ref{generalfamilies}.

\section{Basic definitions}\label{trans}

In this section we introduce the notation used throughout this paper.
Let $m$ and $n$ be integers with $0<m<n$ and let $\mu$ be a finite 
Radon measure on $\mathbb R^n$ with compact support.
The Hausdorff dimension $\dim$ of $\mu$ is defined in terms of local 
dimensions as follows:
\begin{equation}\label{locdimdef}
\dim\mu=\sup\{s\ge0\mid\liminf_{r\to0}\frac{\log\mu(B(x,r))}{\log r}\ge s
        \text{ for }\mu\text{-almost all }x\in\mathbb R^n\},
\end{equation}
where $B(x,r)$ is the open ball with centre at $x$ and radius $r$.
Equivalently,
\begin{equation}\label{setdef}
\dim\mu=\inf\{\dim A\mid A\subset\mathbb R^n\text{ is a Borel set with }
\mu(A)>0\}.
\end{equation}
For this equivalence and other properties of dimensions of measures see
\cite[Proposition 10.2]{F3}. It follows easily from \eqref{locdimdef} that
\begin{equation}\label{energydim}
I_t(\mu)<\infty\implies\dim\mu\ge t,
\end{equation}
where
\[
I_t(\mu)=\iint\vert x-y\vert^{-t}\,d\mu(x)\,d\mu(y)
\]
is the $t$-energy of $\mu$.

Let $k$ be an integer with $0<k<m(n-m)$. Note that $m(n-m)$ is the dimension
of the Grassmann manifold $G(n,m)$ of all $m$-dimensional linear subspaces of 
$\mathbb R^n$. Supposing that $\Lambda\subset\mathbb R^k$ is open, we consider 
parametrized families 
$\{F_\lambda:\mathbb R^n\to\mathbb R^m\mid\lambda\in\Lambda\}$ of smooth maps.
We denote the orthogonal projection in $\mathbb R^n$ onto an $m$-dimensional 
subspace $V\in G(n,m)$ by $\Pi_V$.
When investigating parametrized families of orthogonal projections 
$\Pi_{V_\lambda}:\mathbb R^n\to V_\lambda$ onto $V_\lambda\in G(n,m)$,
we consider them as
mappings from $\mathbb R^n$ to $\mathbb R^n$. Clearly, such a family 
could also be viewed as a family from $\mathbb R^n$ to $\mathbb R^m$
by identifying the range $V_\lambda\in G(n,m)$ with $\mathbb R^m$ in a
systematic manner, for example, by fixing an orthonormal basis of some 
$V_{\lambda^0}$ and by rotating the basis to $V_\lambda$ by a rotation which
rotates $V_\lambda$ to $V_{\lambda^0}$. Since the identification is neither 
unique nor essential, we omit it.

The image of
a measure $\mu$ under a map $T:X\to Y$ is denoted by $T_*\mu$, that is, 
$T_*\mu(A)=\mu(T^{-1}(A))$ for all $A\subset Y$. If $\mu$ is a Radon measure 
on $X$ with compact support and $T$ is a Lipschitz map, the image measure 
$T_*\mu$ is a Radon measure on $Y$ with compact support
\cite[Theorem 1.18]{Mat2}. We use the notation $\spt\mu$ for the
support of a measure $\mu$. 
Obviously, 
\begin{equation}\label{naturalbounds}
\dim\mu-(n-m)\le\dim (\Pi_V)_*\mu\le\min\{\dim\mu,m\}
\end{equation}
for all $V\in G(n,m)$. 

For $r=2,\dots,n$ a simple $r$-vector in $\mathbb R^n$ is denoted by
$v_1\wedge\dots\wedge v_r$ where $v_i\in\mathbb R^n$ for $i=1,\dots,r$.
A non-zero simple $r$-vector $v_1\wedge\dots\wedge v_r$ determines uniquely
an $r$-plane $\langle v_1,\dots,v_r\rangle\in G(n,r)$ (see 
\cite[Section 1.6]{Fe}). The norm of a simple $r$-vector is given by
\[
\Vert v_1\wedge\dots\wedge v_r\Vert=\sqrt{\det(DD^T)}
\]
where $D$ is the $r\times n$-matrix whose $i^{th}$ row consists of the 
coordinates of $v_i$. Note that the norm is equal to the $r$-dimensional volume
of the parallelepiped spanned by $v_1,\dots,v_r$. In particular, if the vectors 
$v_1,\dots,v_l\in\mathbb R^n$ are perpendicular to the vectors 
$u_1,\dots,u_t\in\mathbb R^n$, we have
\begin{equation}\label{perpnorm}
\Vert v_1\wedge\dots\wedge v_l\wedge u_1\wedge\dots\wedge u_t\Vert=
  \Vert v_1\wedge\dots\wedge v_l\Vert\cdot\Vert u_1\wedge\dots\wedge u_t\Vert.
\end{equation}
A linear map 
$L:\mathbb R^n\to\mathbb R^m$ can be
naturally extended to a linear map 
$\wedge_r L:\Lambda_r\mathbb R^n\to\Lambda_r\mathbb R^m$ between the vector 
spaces of $r$-vectors. The norm of $\wedge_r L$ is defined by
\[
\Vert\wedge_r L\Vert=\sup\{\Vert\wedge_r L(\xi)\Vert\mid \xi\text{ is a 
  simple }r\text{-vector with }\Vert\xi\Vert=1\}.
\]
Note that $\Vert\wedge_n L\Vert=|\det L|$.

The following well-known lemma plays a fundamental role in our approach.
We use
the notation $\mathcal L^k$ for the Lebesgue measure on $\mathbb R^k$. 
In the case $k=1$ the Lebesgue measure is denoted by $\mathcal L$.

\begin{lemma}\label{lemma1} Let $n,m,k$ and $l$ be integers satisfying
$0<k<m(n-m)$ and $l\ge m$. Let $\Lambda\subset\mathbb R^k$ be bounded and let
$\{F_\lambda:\mathbb R^n\to\mathbb R^l\mid\lambda\in\Lambda\}$ be a 
parametrized family of Lipschitz maps such that for all $\lambda\in\Lambda$
there exists a smooth $m$-dimensional submanifold of $\mathbb R^l$ containing 
$F_\lambda(\mathbb R^n)$. Assume that 
$\mu$ is a finite Radon measure on $\mathbb R^n$ with compact support
and $r$ is a positive real number such that $r\le m$. 
Suppose that for all $z\in\spt\mu$ there exist $\varepsilon>0$ and $C>0$ such 
that for all $x\ne y\in B(z,\varepsilon)$ and for all $\delta>0$
\begin{equation}\label{smallmeasure}
\mathcal L^k(\{\lambda\in\Lambda\mid\vert F_\lambda(x)-F_\lambda(y)\vert
  \le\delta\})\le C\delta^r\vert x-y\vert^{-r}.
\end{equation}
Then $\dim(F_\lambda)_*\mu=\dim\mu$ for $\mathcal L^k$-almost
all $\lambda\in\Lambda$ provided that $\dim\mu\le r$. Furthermore, 
$\dim(F_\lambda)_*\mu\ge r$ for $\mathcal L^k$-almost all $\lambda\in\Lambda$
provided that $\dim\mu>r$.
Finally, for $\mathcal L^k$-almost all $\lambda\in\Lambda$ the image measure
$(F_\lambda)_*\mu$ is absolutely continuous with respect to the $m$-dimensional 
Hausdorff measure $\mathcal H^m$ if $\dim\mu>m$ and $r=m$.
\end{lemma}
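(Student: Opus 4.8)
The plan is to run the classical potential-theoretic (energy) argument, integrating over the parameter $\lambda$ and exchanging the order of integration; throughout write $\nu_\lambda=(F_\lambda)_*\mu$. Since the hypothesis \eqref{smallmeasure} is local and its constant $C$ depends on $z$, I would first reduce to a uniform situation: as $\spt\mu$ is compact it is covered by finitely many balls $B(z_i,\eps_i)$ on which \eqref{smallmeasure} holds, and writing $\mu$ as a finite sum of its restrictions to a Borel partition subordinate to this cover, I may assume \eqref{smallmeasure} holds for all $x\ne y\in\spt\mu$ with a single constant $C$; here I use that by \eqref{setdef} the dimension of a finite sum of measures is the minimum of the dimensions and that absolute continuity is preserved under finite sums. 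A second reduction handles the energy: for fixed $t<\dim\mu$ the global energy $I_t(\mu)$ need not be finite, so I would exhaust $\mu$ by an increasing sequence of restrictions $\mu|_{E_j}$ with $I_t(\mu|_{E_j})<\infty$, prove the conclusion for each, and glue using \eqref{setdef} (a set carrying positive $\nu_\lambda$-mass must meet some $(F_\lambda)_*(\mu|_{E_j})$) together with the monotone limit $\nu_\lambda=\lim_j(F_\lambda)_*(\mu|_{E_j})$.

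The core estimate is a single-pair bound. Fixing $0<t<r$ and $x\ne y$ in $\spt\mu$, I would compute the $\lambda$-integral of $|F_\lambda(x)-F_\lambda(y)|^{-t}$ by the distribution-function formula, splitting the resulting integral at the scale $\delta=|x-y|$: for $\delta\le|x-y|$ the transversality bound \eqref{smallmeasure} is used and the integral converges at $0$ precisely because $t<r$, while for $\delta>|x-y|$ the trivial bound $\mathcal L^k(\Lambda)<\infty$ is used. This yields
\begin{equation*}
\int_\Lambda|F_\lambda(x)-F_\lambda(y)|^{-t}\,d\mathcal L^k(\lambda)\le C'|x-y|^{-t}
\end{equation*}
with $C'$ independent of $x,y$. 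Tonelli's theorem then gives $\int_\Lambda I_t(\nu_\lambda)\,d\mathcal L^k(\lambda)\le C'I_t(\mu)<\infty$, so $I_t(\nu_\lambda)<\infty$ for almost every $\lambda$, whence $\dim\nu_\lambda\ge t$ by \eqref{energydim}. Letting $t$ increase through a sequence to $\min\{\dim\mu,r\}$ proves $\dim\nu_\lambda\ge\min\{\dim\mu,r\}$ almost surely, and combining with the elementary upper bound $\dim\nu_\lambda\le\dim\mu$ (Lipschitz maps do not raise dimension, compare \eqref{naturalbounds}) settles the first two assertions.

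For the last assertion I would work at the critical exponent $r=m$ and aim for an $L^2$ density rather than a mere dimension bound. Set
\begin{equation*}
J_\lambda(\delta)=\delta^{-m}\iint\mathbf 1\{|F_\lambda(x)-F_\lambda(y)|<\delta\}\,d\mu(x)\,d\mu(y)=\delta^{-m}\int\nu_\lambda(B(u,\delta))\,d\nu_\lambda(u).
\end{equation*}
Integrating in $\lambda$ and applying \eqref{smallmeasure} with $r=m$ gives $\int_\Lambda J_\lambda(\delta)\,d\mathcal L^k(\lambda)\le C\,I_m(\mu)$ uniformly in $\delta$, so Fatou's lemma yields $\liminf_{\delta\to0}J_\lambda(\delta)<\infty$ for almost every $\lambda$. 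The point is that $J_\lambda(\delta)$ is comparable to the squared $L^2$-norm of an $m$-dimensional mollification of $\nu_\lambda$: using the hypothesis that $F_\lambda(\mathbb R^n)$ lies in a smooth $m$-dimensional submanifold, I would flatten the manifold locally by a bi-Lipschitz chart, so that $\delta$-balls of $\mathbb R^l$ meet it in sets comparable to intrinsic $\delta$-balls and $J_\lambda(\delta)$ controls $\|\nu_\lambda*\varphi_\delta\|_2^2$ in the chart. A finite liminf of these norms forces, by weak compactness in $L^2$, the weak limit $\nu_\lambda$ to possess an $L^2$ density, whence $\nu_\lambda\ll\mathcal H^m$.

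I expect the third assertion to be the real obstacle. The first two parts are the standard energy scheme and are essentially forced once the single-pair estimate is in place. The difficulty in the last part is that the averaged single-scale bound only controls $\liminf_{\delta\to0}$, and a finite \emph{lower} density is genuinely insufficient for absolute continuity; I must therefore pass through the integrated $L^2$-quantity $J_\lambda(\delta)$ and the weak-compactness argument rather than through a pointwise density theorem, and I must use the smoothness of the target manifold to reduce the density computation, taken with respect to balls of the ambient $\mathbb R^l$, to the flat $m$-dimensional situation. Carrying out this flattening uniformly enough to keep the comparison constants under control is the step that needs genuine care.
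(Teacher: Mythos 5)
Your proposal is correct and takes essentially the same approach as the paper: the same finite-cover localization of \eqref{smallmeasure} on $\spt\mu$, the standard Kaufman-type energy integration in $\lambda$ that the paper imports from \cite[Lemmas 2.1 and 2.2]{JJLL}, and, for absolute continuity, the restriction to finite $m$-energy followed by the Fatou/$L^2$-mollification and weak-compactness argument with bi-Lipschitz flattening of the target manifold, which is precisely the proof of \cite[Theorem 9.7]{Mat2} that the paper invokes together with the comparability of $\mathcal H^m|_{M_\lambda}(B(x,r))$ with $r^m$. One minor inaccuracy in your closing commentary (not affecting correctness): once you have flattened to $\mathbb{R}^m$ you are comparing against full-dimensional Lebesgue measure, and there a finite \emph{lower} density $\nu$-a.e.\ does imply absolute continuity via a Besicovitch--Vitali covering argument, so the $L^2$ route is a convenient choice (and yields an $L^2$ density) rather than a forced one.
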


\begin{proof} Covering the compact set $\spt\mu$ by a finite collection
of open balls $B(z_i,\varepsilon_i)$ and letting 
$\mu_i=\mu|_{B(z_i,\varepsilon_i)}$
be the restriction of $\mu$ to the ball $B(z_i,\varepsilon_i)$, we have
$\dim\mu=\min_i\dim\mu_i$ and 
$\dim(F_\lambda)_*\mu=\min_i\dim(F_\lambda)_*\mu_i$. Therefore, we may restrict
our consideration to a restricted measure $\mu_i$. 

The first two claims follow similarly as 
in \cite[Lemmas 2.1 and 2.2]{JJLL}. Even though \cite{JJLL} deals with 
projections the only essential assumption is that $(F_\lambda)_*\mu$ is a 
Radon measure.

For the last claim proceed as in the proof of \cite[Lemma 2.2]{JJLL} to find
a restriction of $\mu$ having finite $m$-energy and apply the proof of 
\cite[Theorem 9.7]{Mat2}. Here we use the assumption
that the range of $F_\lambda$ is contained in a smooth $m$-dimensional 
submanifold $M_\lambda$ which implies that $\mathcal H^m|_{M_\lambda}(B(x,r))$
is comparable to $r^m$. 
\end{proof}

\begin{remark}\label{localcoor} 
In the proof of Theorem~\ref{projresult} we need local coordinates on $G(n,m)$
and the following choice turns out to be useful.
Consider $V\in G(n,m)$. Let $\{e_1,\dots,e_m\}$ and 
$\{e_{m+1},\dots,e_n\}$ be orthonormal bases of $V$ and 
its orthogonal complement $V^\perp\in G(n,n-m)$, respectively. 
One may choose local coordinates on $G(n,m)$ near $V$ in terms of 
rotations of the basis vectors 
$\{e_1,\dots,e_m\}$ in the following manner: For $i=1,\dots,m$ and 
$j=m+1,\dots,n$, let $-\tfrac\pi 4<\alpha_{ij}<\tfrac\pi 4$ be the components 
of $\alpha\in ]-\tfrac\pi 4,\tfrac\pi 4[^{m(n-m)}$.
Rotating $e_i$ by the angle $\alpha_{ij}$ towards $e_j$ for all $i$ and $j$ 
gives local coordinates for the $m$-plane $V(\alpha)$ spanned by the rotated 
vectors. More precisely, 
$V(\alpha)=\langle e_1(\alpha),\dots,e_m(\alpha)\rangle$, 
where $e_i(\alpha)=\prod_{j=m+1}^nR^{ij}(\alpha_{ij})e_i$ is an ordered 
product for all $i=1,\dots,m$ and
\[
\bigl(R^{ij}(\beta)x\bigr)_l=
\begin{cases} x_i\cos\beta - x_j\sin\beta, &\text{if }l=i\\
              x_i\sin\beta + x_j\cos\beta, &\text{if }l=j\\ 
              x_l, &\text{otherwise.}
\end{cases}
\]
For the proof of the fact that these rotations give local coordinates, see 
\cite[Remark 2.4]{JJLL}. Further, let
$\{\frac\partial{\partial\alpha_{ij}}\mid i=1,\dots,m,j=m+1,\dots,n\}$ be 
the basis of the tangent space $T_VG(n,m)$ obtained in this way. 
A straightforward calculation shows that for any $z\in V^\perp$, $w\in V$,
$i\in\{1,\dots,m\}$ and $j\in\{m+1,\dots,n\}$ we have
\begin{equation}\label{linearmap}
\left.\frac{\partial\Pi_{V(\alpha)}(z)}{\partial\alpha_{ij}}\right|_{\alpha=0}
  =z_je_i\quad\text{and}\quad
\left.\frac{\partial\Pi_{V(\alpha)}(w)}{\partial\alpha_{ij}}\right|_{\alpha=0}
  =w_ie_j.
\end{equation}

\end{remark}

\section{Families of projections}\label{projfamilies}

In this section we state and prove our main theorem concerning parametrized
families of orthogonal projections. 
We equip the Grassmann manifold $G(n,m)$ with a Riemann metric and 
define the class of families of projections we are working with.

\begin{definition}\label{nondegenerate}
Let $\Lambda\subset\mathbb R^k$ be open. A parametrized family 
$\{\Pi_{V_\lambda}\mid\lambda\in\Lambda\}$ of orthogonal projections in 
$\mathbb R^n$ onto $m$-planes is called \emph{non-degenerate} if the mapping
$\lambda\mapsto V_\lambda$ is continuously differentiable and the derivative
$D_\lambda V_\lambda$ is injective for all $\lambda\in\Lambda$.
\end{definition}

For all $x\in\mathbb R$, we denote by $]x]$ 
the smallest integer $q\ge 0$ such that $x\le q$. Furthermore, given an 
integer $0<k<m(n-m)$, define
\begin{equation}\label{p}
p(l)=n-m-\left]\frac{k-l(n-m)}{m-l}\right]
\end{equation}
for $l=0,\dots,m-1$.

\begin{theorem}\label{projresult}
Let $\Lambda\subset\mathbb R^k$ be an open set and let $\mu$ be a finite 
Radon measure on $\mathbb R^n$ with compact support. Assume that a family 
$\{\Pi_{V_\lambda}\mid\lambda\in\Lambda\}$ of orthogonal projections in 
$\mathbb R^n$ onto $m$-planes is non-degenerate. Then for all $l=0,\dots,m-1$ 
and for $\mathcal L^k$-almost all $\lambda\in\Lambda$ 
\begin{equation}\label{lowerlimit}
\dim (\Pi_{V_\lambda})_*\mu\ge
\begin{cases} \dim\mu-p(l), &\text{if }p(l)+l\le\dim\mu\le p(l)+l+1,\\
               l+1, &\text{if }p(l)+l+1\le\dim\mu\le p(l+1)+l+1.
\end{cases}
\end{equation}
Furthermore, for $\mathcal L^k$-almost all $\lambda\in\Lambda$ the projected
measure $(\Pi_{V_\lambda})_*\mu$ is absolutely 
continuous with respect to $\mathcal H^m$ provided that
$\dim\mu>p(m-1)+m$. 
The lower bounds given in \eqref{lowerlimit} and the condition for the 
absolute continuity are the best possible ones. 
\end{theorem}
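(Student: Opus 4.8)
The plan is to deduce everything from the transversality estimate \eqref{smallmeasure} of Lemma~\ref{lemma1}, so the first task is to understand, for a fixed difference vector $v=x-y$, the size of the set of parameters for which $\Pi_{V_\lambda}(v)$ is short. Since $\Pi_{V_\lambda}(x)-\Pi_{V_\lambda}(y)=\Pi_{V_\lambda}(v)$, I would work in the local coordinates of Remark~\ref{localcoor} around a fixed $V_{\lambda_0}$ and use \eqref{linearmap}: the differential of $\lambda\mapsto\Pi_{V_\lambda}(v)$ is, to leading order, the linear map $A\mapsto A\hat v$, where $A$ ranges over the tangent subspace $W_{\lambda_0}=\mathrm{image}(D_\lambda V_\lambda)$, which is $k$-dimensional exactly because the family is non-degenerate, regarded as $m\times(n-m)$ matrices, and $\hat v\in\mathbb R^{n-m}$ collects the $V_{\lambda_0}^\perp$-coordinates of $v$. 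The crucial local invariant is then the rank $\rho_{\lambda_0}(v)=\dim\{A\hat v:A\in W_{\lambda_0}\}$, and a standard Taylor-expansion-plus-covering argument upgrades this into a transversality bound $\mathcal L^k(\{\lambda:|\Pi_{V_\lambda}(v)|\le\delta\})\lesssim(\delta/|v|)^{\rho}$ valid wherever $\rho_{\lambda_0}(v)\ge\rho$.

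The difficulty is that $\rho_{\lambda_0}(v)$ genuinely depends on the direction of $v$, so Lemma~\ref{lemma1} cannot be applied with a single exponent $r$. I would instead estimate the averaged energy $\int_\Lambda I_t\big((\Pi_{V_\lambda})_*\mu\big)\,d\lambda=\iint\big(\int_\Lambda|\Pi_{V_\lambda}(x-y)|^{-t}\,d\lambda\big)\,d\mu(x)\,d\mu(y)$ and split the domain $\spt\mu\times\spt\mu$ according to the rank of the direction $x-y$. On the part where the rank is at least $\rho$ the inner integral is comparable to $|x-y|^{-t}$ as long as $t<\rho$, so by \eqref{energydim} the projected dimension is governed by the competition between the rank available in a given direction and the amount of $\mu\times\mu$-mass concentrated near the low-rank directions. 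The heart of the matter is therefore a linear-algebra estimate, to be applied along the whole flag of levels $l$: for an arbitrary $k$-dimensional subspace $W$ of $m\times(n-m)$ matrices the locus $\{v\in\mathbb R^{n-m}:\rho(v)\le l\}$ has dimension at most $p(l)$, and this extremal value is attained by spending the parameter budget as $k=l(n-m)+q(m-l)$ — baseline rank $l$ in all $n-m$ perpendicular directions at cost $l(n-m)$, then lifting $q$ of them above rank $l$ at cost $q(m-l)$ — with the ceiling in \eqref{p} accounting for integrality. Since shrinking the low-rank locus only helps, the worst case over all non-degenerate families is precisely $\dim\{\rho\le l\}=p(l)$, and this produces the two regimes in \eqref{lowerlimit}: on the ramp $p(l)+l\le\dim\mu\le p(l)+l+1$ the projection loses the full blind subspace and retains $\dim\mu-p(l)$, while once $\dim\mu\ge p(l)+l+1$ the mass is forced into directions of rank at least $l+1$, pinning the bound at $l+1$ until $\dim\mu$ reaches $p(l+1)+l+1$ and the next level engages.

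For the absolute-continuity statement I would run the same decomposition with $r=m$: once $\dim\mu>p(m-1)+m$ the measure is forced into directions of full rank $m$, a suitable restriction of $\mu$ has finite $m$-energy, and the last claim of Lemma~\ref{lemma1} applies since the range is an $m$-dimensional submanifold. To prove sharpness I would reverse the construction above: place the base plane so that $V_{\lambda_0}^\perp$ splits as $P\oplus Q$ with $\dim P=p(l)$, choose a non-degenerate family whose tangent subspace realizes rank $\le l$ exactly on $P$ and strictly larger rank transversally, and take $\mu$ to be a product (or self-similar) measure whose difference set concentrates in the blind directions spanning $P$ and $V_{\lambda_0}$. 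A direct computation of $\dim(\Pi_{V_\lambda})_*\mu$ for these explicit, partially degenerate projections then matches \eqref{lowerlimit} exactly, and by tuning the mass to sit at the threshold $\dim\mu=p(m-1)+m$ one obtains a family for which $(\Pi_{V_\lambda})_*\mu$ carries a singular part, showing that the absolute-continuity condition cannot be weakened.

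The main obstacle I anticipate is exactly this linear-algebra and combinatorial core: proving that no non-degenerate $k$-parameter family can enlarge the rank-$\le l$ locus beyond the extremal profile encoded by $p(l)$, and doing so with enough uniformity in $\lambda$ that the local transversality estimates patch together into the global energy bound. Everything else is bookkeeping on top of Lemma~\ref{lemma1} and standard potential theory.
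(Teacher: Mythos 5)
Your reduction to a rank-stratified energy estimate breaks down exactly where the theorem has content: on pairs $(x,y)$ whose difference direction lies in, or near, the low-rank locus. For such directions the exponent available in the inner integral $\int_\Lambda|\Pi_{V_\lambda}(x-y)|^{-t}\,d\lambda$ is capped by the rank, so for $t$ above the rank you have no bound at all; indeed, in the extremal families some directions are annihilated by $\Pi_{V_\lambda}$ for \emph{every} $\lambda$, making the inner integral identically infinite on the locus and arbitrarily large nearby. At the same time nothing prevents $\mu\times\mu$ from concentrating essentially all of its mass on such pairs: the paper's own sharpness construction takes $\mu$ supported on the span of finitely many $V_{\lambda^0}$-directions together with directions annihilated by every $\Pi_{V_\lambda}$, and for that measure the first bound in \eqref{lowerlimit} is attained with equality. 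Hence in the regime $p(l)+l\le\dim\mu\le p(l)+l+1$, where one must produce finite $t$-energy of the projections for $t$ arbitrarily close to $\dim\mu-p(l)$ (which exceeds $l$ once $\dim\mu>p(l)+l$), your decomposition gives no control whatsoever on the portion of the averaged energy carried by the low-rank pairs --- and that portion can be all of it. The ``competition'' you appeal to is therefore the theorem itself, not a step of a proof: converting ``the bad directions form a set of dimension at most $p(l)$'' into ``the dimension loss is at most $p(l)$'' requires a mechanism your outline never supplies. A related defect is that Lemma~\ref{lemma1} requires \eqref{smallmeasure} with a single constant $C$ valid for all pairs $x\ne y$ in a ball, while your pointwise transversality estimate carries constants that degenerate as the direction approaches the low-rank locus, so it cannot be fed into Lemma~\ref{lemma1} either.

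The paper closes this gap with an idea absent from your proposal, and it is not an energy decomposition. Using the subspace $W$ produced by Lemma~\ref{independent} (the quantitative, uniform version of your linear-algebra claim, proved via Cauchy--Binet and pigeonhole --- the claim you yourself flag as unproved), the $k$-parameter family of $m$-planes is \emph{extended} to a $(k+pt)$-parameter family of $(m+p)$-planes $\widetilde V_{\tilde\lambda}\supset V_{\tilde\lambda^1}$, where $p=p(l)$ and $t=n-m-p$: the added parameters rotate the directions of $V_{\lambda^0}^\perp$ complementary to $W$. For this extended family the hypothesis \eqref{bigvol} of Proposition~\ref{dotbound} holds uniformly with exponent $r=l+1+p$, precisely because $\widetilde V_{\tilde\lambda^0}^\perp=W$ contains only good directions. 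Lemma~\ref{lemma1} then yields $\dim(\Pi_{\widetilde V_{\tilde\lambda}})_*\mu=\dim\mu$ (or $\ge r$) for almost every extended parameter --- the formerly bad directions cause no harm because the larger projection no longer collapses them --- and the deterministic inequality \eqref{naturalbounds}, applied through the factorization $\Pi_{V_{\tilde\lambda^1}}=\Pi_{V_{\tilde\lambda^1}}\circ\Pi_{\widetilde V_{\tilde\lambda}}$, caps the additional loss at $p$; Fubini's theorem then returns the statement to $\mathcal L^k$-almost every $\lambda\in\Lambda$. Your outline keeps the target $m$-planes and the $k$ parameters throughout, so it has no counterpart of this extension step; without it (or an equivalent device, such as slicing $\mu$ along the bad cone), the bound $\dim\mu-p(l)$ is out of reach of the argument you describe.
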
  

\begin{remark}\label{full}
a) Theorem~\ref{projresult} is valid if $D_\lambda V_\lambda$ is injective only
for $\mathcal L^k$-almost all $\lambda\in\Lambda$ since, by continuity, the set
$N=\{\lambda\in\Lambda\mid D_\lambda V_\lambda\text{ is not injective}\}$
is closed, and therefore, one may 
replace $\Lambda$ by $\Lambda\setminus N$.

b) The injectivity assumption is natural: Theorem~\ref{projresult} is not 
necessarily true without it. Indeed, by the sharpness of 
\eqref{lowerlimit}, there is 
a $(k-1)$-dimensional family for which the lower bound in \eqref{lowerlimit}
is obtained. We extend the family to a $k$-dimensional one by
adding an extra parameter which does not change the maps. The extension does not
affect the dimensions of the projections, and it follows from \eqref{p} that
\eqref{lowerlimit} is not valid for the extended family for 
which the injectivity fails.  

c) The fact that the function $p$ in \eqref{p} is increasing can be 
gleaned from Figure~\ref{pic5}. Indeed, after filling the $l$ lowest rows in
Figure~\ref{pic5} with dots, one is left with $k-l(n-m)$ dots, where $k$ is 
the original number of dots. Proceed by filling the columns from left. The 
number of the columns needed is $]\tfrac{k-l(n-m)}{m-l}]$ implying that
$p(l)$ is the number of the remaining unoccupied 
columns. When increasing $l$ by one, one needs to move dots from
the last occupied column to the unoccupied slots on the $(l+1)^{\text{th}}$ row.
This means that the number of unoccupied columns may increase but not decrease. 
\end{remark}

\begin{figure}[htp!]
\begin{center}
{\includegraphics[scale=0.6]{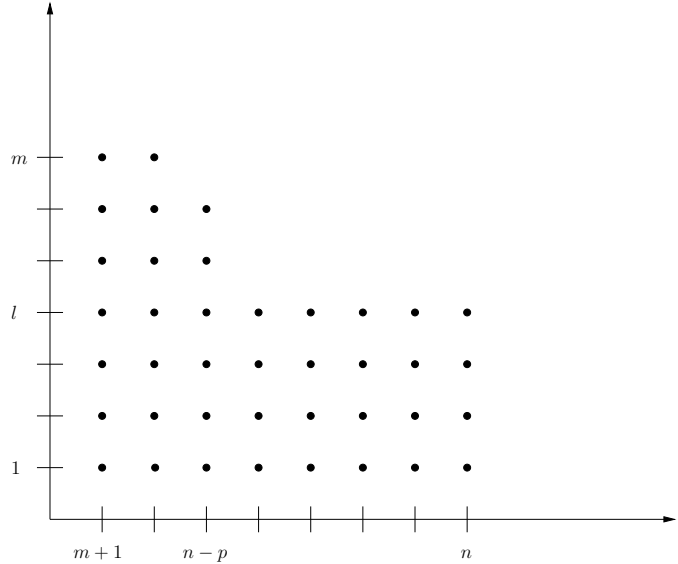}}
\caption{The dotting that explains the definition of $p(l)$ and the sharpness 
of the lower bounds in Theorem~\ref{projresult}.}\label{pic5}
\end{center}
\end{figure}

We continue by proving a technical lemma. 

\begin{lemma}\label{independent}
Let $A_1,\dots,A_k:\mathbb R^{n-m}\to\mathbb R^m$ be linear maps and let 
$C,d>0$. Assume that $\Vert A_i\Vert<C$ for all $i=1,\dots,k$ and 
$\Vert A_1\wedge\dots\wedge A_k\Vert>d$ where $A_1,\dots,A_k$ are considered as
vectors in $\mathbb R^{m(n-m)}$.
Suppose that for some integers $1\le t\le n-m$ and $0\le l\le m-1$ 
we have $k>m(t-1)+l(n-m-t+1)$. Then there exist $d'$ depending only on $C$, 
$d$ and $n$, and a $t$-dimensional subspace $W\subset\mathbb R^{n-m}$ such that
for all $z\in W\setminus\{0\}$ there are $j_1,\dots,j_{l+1}$ satisfying 
\begin{equation}\label{bigvol2}
\Vert A_{j_1}(z)\wedge\dots\wedge A_{j_{l+1}}(z)\Vert>d'|z|^{l+1}.
\end{equation}
In particular, $\dim\langle A_1(z),\dots,A_k(z)\rangle\ge l+1$ for all 
$z\in W\setminus\{0\}$.
\end{lemma}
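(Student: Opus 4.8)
The plan is to decouple the qualitative and the quantitative parts. First I would reduce the asserted estimate to the purely qualitative statement that \emph{some} $t$-dimensional subspace $W$ satisfies $\dim\langle A_1(z),\dots,A_k(z)\rangle\ge l+1$ for every $z\in W\setminus\{0\}$. Indeed, once this is known for every $k$-tuple $(A_1,\dots,A_k)$ of linearly independent maps, the uniform constant $d'$ comes for free: the quantity
\[
g(A_1,\dots,A_k)=\sup_{W\in G(n-m,t)}\ \min_{z\in W,\,|z|=1}\ \max_{|J|=l+1}\Big\|\bigwedge_{j\in J}A_j(z)\Big\|
\]
is continuous in $(A_1,\dots,A_k)$, being a supremum over the compact Grassmannian $G(n-m,t)$ of a function continuous jointly in the maps and in $W$, and the qualitative statement says precisely that it is strictly positive. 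Hence $g$ attains a positive minimum $d'$ on the compact set $\{\Vert A_i\Vert\le C,\ \Vert A_1\wedge\dots\wedge A_k\Vert\ge d\}$; taking the least such $d'$ over the finitely many admissible tuples $(m,k,t,l)$ attached to a given $n$ yields a bound depending only on $C$, $d$ and $n$, and \eqref{bigvol2} then follows by homogeneity.

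The heart of the matter is therefore the qualitative claim, and the key algebraic input I would isolate is the following counting lemma: \emph{if $L\subset\mathbb R^{n-m}$ is a linear subspace of dimension $e$ on which $\dim\langle A_1(z),\dots,A_k(z)\rangle\le l$ for all $z$, then $k\le m(n-m-e)+le$.} I would prove this by induction on $e$. Restricting each $A_i$ to $L$ produces maps whose span $T\subseteq\mathrm{Hom}(L,\mathbb R^m)$ satisfies $\dim T\ge k-m(n-m-e)$, because the maps vanishing on $L$ form a space of dimension $m(n-m-e)$. The hypothesis says exactly that the evaluation $T\ni B\mapsto Bz$ has rank at most $l$ for every $z\in L$, so it suffices to show $\dim T\le le$. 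Fixing a hyperplane of $L$ and restricting further, the induction hypothesis bounds the restricted span by $l(e-1)$, while the maps in $T$ killing the hyperplane contribute, by evaluation along the remaining direction, at most $l$; adding these gives $\dim T\le le$, which combined with $\dim T\ge k-m(n-m-e)$ yields the asserted bound.

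Applying the counting lemma with $e=n-m-t+1$, so that $m(n-m-e)+le=m(t-1)+l(n-m-t+1)$, the hypothesis $k>m(t-1)+l(n-m-t+1)$ shows that \emph{no} linear subspace of dimension $n-m-t+1$ lies inside the bad cone $Z=\{z:\dim\langle A_1(z),\dots,A_k(z)\rangle\le l\}$. This is exactly the threshold recorded by the dotting of Figure~\ref{pic5}: after a Cauchy--Binet reduction of $\Vert A_1\wedge\dots\wedge A_k\Vert>d$ to a non-degenerate $k\times k$ minor, it is equivalent to the pigeonhole fact that the $k$ selected entries cannot be packed into $t-1$ full columns of the $m\times(n-m)$ grid together with columns carrying at most $l$ entries, so that at least $t$ columns must carry $l+1$ entries each.

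The remaining—and genuinely delicate—step is to pass from \emph{no bad linear subspace of dimension $n-m-t+1$} to \emph{a single $t$-dimensional $W$ meets $Z$ only at the origin}. Since $Z=\bigcup_{U\in G(m,l)}\{z:A_i(z)\in U\ \forall i\}$ is a union of linear subspaces, each of dimension at most $n-m-t$ by the counting lemma, indexed by the whole Grassmannian $G(m,l)$, the difficulty is to control this union rather than each piece separately: one must combine the combinatorial structure furnished by the non-degenerate minor with the counting lemma to force $\dim Z\le n-m-t$, after which a generic (or inductively chosen) $t$-plane avoids $Z\setminus\{0\}$. I expect this union-control to be the main obstacle, since a careless dimension count over $G(m,l)$ loses a factor of $l(m-l)$; by contrast the quantitative conclusion is automatic once the qualitative existence of $W$ is in hand.
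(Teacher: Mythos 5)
Your first two steps are sound: the compactness reduction of the quantitative bound to the qualitative statement is legitimate (though it produces a non-explicit $d'$, where the paper's argument yields $d'=d/C'$ directly), and your counting lemma, proved by induction on $\dim L$ via restriction to a hyperplane, is correct. But the proposal is genuinely incomplete at exactly the step you flag at the end: passing from ``the bad cone $Z$ contains no linear subspace of dimension $n-m-t+1$'' to ``some $t$-plane meets $Z$ only at the origin.'' This is not a deferrable technicality --- it is the heart of the lemma. Nothing you establish bounds the dimension of the union $Z=\bigcup_{U\in G(m,l)}\bigcap_i A_i^{-1}(U)$, and the counting lemma cannot do so even in principle: knowing that a homogeneous set contains no $(n-m-t+1)$-dimensional \emph{linear} subspace does not bound its dimension by $n-m-t$, since a real algebraic cone such as $\{x\in\mathbb R^3 \mid x_3^2=4(x_1^2+x_2^2)\}$ is $2$-dimensional yet contains only lines. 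So the generic-plane count you hope to run over $G(n-m,t)$ cannot be extracted from what you have proved, and the qualitative existence of $W$ remains open in your write-up.

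The gap closes once you negate the conclusion instead of analyzing $Z$, which is what the paper does. If \emph{every} $t$-dimensional subspace contained a bad unit vector, you could choose orthonormal bad vectors $z_1,\dots,z_{n-m-t+1}$ greedily: having found $z_1,\dots,z_{i-1}$, the orthogonal complement $\langle z_1,\dots,z_{i-1}\rangle^\perp$ still has dimension $n-m-i+1\ge t$, hence contains a $t$-plane, hence a bad unit vector $z_i$. The observation you are missing is that badness at these $n-m-t+1$ \emph{independent vectors alone} --- not on an entire subspace --- already contradicts $k>m(t-1)+l(n-m-t+1)$: since a linear map is determined by its values on a basis, evaluation at $z_1,\dots,z_{n-m}$ embeds the $k$-dimensional span of $A_1,\dots,A_k$ into $\prod_{i\le n-m-t+1}\langle A_1(z_i),\dots,A_k(z_i)\rangle\times(\mathbb R^m)^{t-1}$, whose dimension is at most $l(n-m-t+1)+m(t-1)<k$. (Your own inductive proof of the counting lemma in fact only uses badness at the vectors of a basis of $L$, so it adapts verbatim; it is only your formulation ``badness at every point of $L$'' that forced you into controlling the union over $G(m,l)$.) With this, your compactness step then delivers the constant $d'$. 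The paper runs the same negation quantitatively: in the orthonormal basis extending the greedily chosen $z_i$, Cauchy--Binet plus the pigeonhole principle applied to the $k\times m(n-m)$ matrix of the $A_j$ gives $d<\Vert A_1\wedge\dots\wedge A_k\Vert\le C'\tilde d$, which both proves the qualitative claim and produces the explicit constant, making the compactness argument unnecessary.
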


\begin{proof}
The heuristic idea behind the proof is as follows: assuming that the last
claim is not true and using the fact that a linear map is uniquely determined
by the images of the basis vectors, one can find $n-m-t+1$ orthonormal vectors 
having at most $l$ linearly independent images. 
Since the remaining $t-1$ basis vectors have at most $m$ linearly 
independent images, there are at most $m(t-1)+l(n-m-t+1)<k$ independent maps
in the family $\{A_1,\dots,A_k\}$ which is a contradiction. Hence, the last 
claim holds which,
in turn, implies \eqref{bigvol2} since the left hand side of \eqref{bigvol2}
is continuous and homogeneous of degree $l+1$.

To make the above idea rigorous, assume that for some $\tilde d>0$ and 
for all $t$-dimensional subspaces $W\subset\mathbb R^{n-m}$ there is 
$z\in W\setminus\{0\}$ such that the inequality  
$\Vert A_{j_1}(z)\wedge\dots\wedge A_{j_{l+1}}(z)\Vert\le\tilde d|z|^{l+1}$
holds for all $j_1,\dots,j_{l+1}$. By homogeneity, one can find inductively 
orthonormal vectors $z_1,\dots,z_{n-m-t+1}$ with
\begin{equation}\label{smallvol2}
\Vert A_{j_1}(z_i)\wedge\dots\wedge A_{j_{l+1}}(z_i)\Vert\le\tilde d
\end{equation}
for all $i=1,\dots,n-m-t+1$ and for all $j_1,\dots,j_{l+1}$. 
We will prove that \eqref{smallvol2} implies $\tilde d>\frac d{C'}$ where $C'$ 
depends only on $C$ and $n$. This means that \eqref{bigvol2} holds with
$d'=\frac d{C'}$ since otherwise by taking $\tilde d=d'$ we would get a 
contradiction.

Extend $\{z_1,\dots,z_{n-m-t+1}\}$ to an orthonormal basis 
$\{z_1,\dots,z_{n-m}\}$ of $\mathbb R^{n-m}$, fix an orthonormal basis in 
$\mathbb R^m$, and view $A_j$ as an $m\times(n-m)$-matrix determined by these 
bases. Let $D$ be the $k\times m(n-m)$-matrix whose $j^{th}$ row consists of 
the elements of $A_j$. According to the Cauchy-Binet formula 
\[
\Vert A_1\wedge\dots\wedge A_k\Vert=\sqrt{\det(DD^T)}=\sqrt{\sum_{d_k}d_k^2}
\]
where the sum is over all $k\times k$-minors $d_k$ of $D$. Since 
$k>m(t-1)+l(n-m-t+1)$, the pigeonhole principle implies
that any $k\times k$-submatrix of $D$ contains at least $l+1$ columns picked 
out from the set of columns determined by $A_1(z_i)$ for some 
$i=1,\dots,n-m-t+1$. 
Applying the Cauchy-Binet formula in \eqref{smallvol2} gives that any
$(l+1)\times(l+1)$-minor picked out from these $l+1$ columns has absolute value
at most $\tilde d$, and therefore, every term in the expression of any 
minor $d_k$ 
contains a factor at most $\tilde d$. From the fact that 
$\Vert A_i\Vert\le C$ for all $i=1,\dots,k$,
we derive
\[
d<\Vert A_1\wedge\dots\wedge A_k\Vert\le C'\tilde d
\]
where $C'$ depends on $C$, $k$, $l$, $n$ and $m$. Since 
$l<m<n$ and $k\le m(n-m)$, we may choose $C'$ in such a way that it depends 
only on $C$ and $n$. Therefore, as we claimed $\tilde d>\frac d{C'}$, which
completes the proof of \eqref{bigvol2}.

Finally, the last 
claim follows since $\dim\langle A_{j_1}(z),\dots,A_{j_{l+1}}(z)\rangle=l+1$.
\end{proof}

\begin{remark}\label{vgenerates}
Define $F:G(n,m)\times\mathbb R^n\to\mathbb R^n$ by $F(V,z)=\Pi_V(z)$. Given
$V_0\in G(n,m)$, by Remark~\ref{localcoor} (see \eqref{linearmap}),  
the formula
$\frac{\partial F(V_0,\cdot)}{\partial\alpha_{ij}}$ 
defines a linear map from $V_0^\perp$ to $V_0$, where $\alpha_{ij}$ are the
local coordinates around $V_0$ defined in Remark~\ref{localcoor}. 
The maps $\frac{\partial F(V_0,\cdot)}{\partial\alpha_{ij}}$ are clearly 
linearly independent for $i=1,\dots,m$ and 
$j=m+1,\dots,n$, and since 
$\dim G(n,m)=m(n-m)$, all linear maps from $V_0^\perp$ to $V_0$
are linear combinations of them. In particular, the map defined by
$D_VF(V_0,\cdot)(v)$ for all $v\in T_{V_0}G(n,m)$ is a bijection from
$T_{V_0}G(n,m)$ onto the space of linear maps from $V_0^\perp$ to $V_0$. 
Linear maps from $V_0$ to $V_0^\perp$ can be characterized similarly.

Let $\lambda^0\in\Lambda\subset\mathbb R^k$, $z_1\in V_{\lambda^0}$ and
$z_2\in V_{\lambda^0}^\perp$. By the above observation
\begin{equation}\label{images}
D_\lambda\Pi_{V_{\lambda^0}}(z_1)(w)\in V_{\lambda^0}^\perp\text{ and }
D_\lambda\Pi_{V_{\lambda^0}}(z_2)(w)\in V_{\lambda^0}
\end{equation}
for all $w\in\mathbb R^k$,
where $D_\lambda\Pi_{V_{\lambda^0}}(z)$ is the derivative of the map
$\lambda\mapsto\Pi_{V_\lambda}(z)$ at $\lambda^0$. Since 
$D_\lambda\Pi_{V_{\lambda^0}}(z_1+z_2)(w)=D_\lambda\Pi_{V_{\lambda^0}}(z_1)(w)
  +D_\lambda\Pi_{V_{\lambda^0}}(z_2)(w)$,
condition \eqref{images} implies that 
\[
\Pi_{V_{\lambda^0}}\bigl(\wedge_r D_\lambda\Pi_{V_{\lambda^0}}(z_1+z_2)(\xi)\bigr)
  =\wedge_r D_\lambda\Pi_{V_{\lambda^0}}(z_2)(\xi)
\]
for any $r\le m$ and for any simple $r$-vector $\xi$ on $\mathbb R^k$. 
Therefore, we have for any $r\le m$ that 
\begin{equation}\label{projbound}
\Vert\wedge_r D_\lambda\Pi_{V_{\lambda^0}}(z_1+z_2)\Vert
  \ge\Vert\wedge_r D_\lambda\Pi_{V_{\lambda^0}}(z_2)\Vert.
\end{equation}
\end{remark}
  
We will be working with families for which the transversality condition 
\cite[Definition 7.2]{PS} is not valid. The following proposition, which may be 
regarded as a partial transversality condition, is our main tool. 
In Proposition \ref{dotbound} the projection family does not need to be 
non-degenerate. Since our main interest is the case $r<m$ and $r<k$, we 
cannot apply directly the area formula \cite[Theorem 3.2.3]{Fe} or the coarea
formula \cite[Theorem 3.2.11]{Fe}.

\begin{proposition}\label{dotbound} Let $\Lambda\subset\mathbb R^k$ be an open 
set and let $\{\Pi_{V_\lambda}\mid\lambda\in\Lambda\}$ be a 
family of orthogonal projections in $\mathbb R^n$ onto $m$-planes. Suppose 
that the mapping $\lambda\mapsto V_\lambda$ has a uniformly continuous 
derivative and there exists $C_0>0$ with $\Vert D_\lambda V_\lambda\Vert<C_0$
for all $\lambda\in\Lambda$. Fix $\lambda^0\in\Lambda$. Assume that there are
$r\le m$ and $d>0$ such that for any $z\in V_{\lambda^0}^\perp$
\begin{equation}\label{bigvol}
\Vert\wedge_r D_\lambda\Pi_{V_{\lambda^0}}(z)\Vert>d|z|^r.
\end{equation}
Then there exist $C>0$ and $R>0$ such that for all $\delta>0$ and
for all $x\ne y\in\mathbb R^n$ we have  
\[
\mathcal L^k(\{\lambda\in B(\lambda^0,R)\mid |\Pi_{V_\lambda}(x-y)|\le\delta\})
  \le C\delta^r|x-y|^{-r}.
\]
\end{proposition}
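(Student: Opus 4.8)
The plan is to reduce the statement, by homogeneity and a dichotomy on $x-y$, to a single sublevel-set estimate for the map $g(\lambda)=\Pi_{V_\lambda}(u)$, and then to prove that estimate by foliating a small ball around $\lambda^0$ into $r$-planes on which $g$ is lower-Lipschitz. Since $\Pi_{V_\lambda}$ is linear, $|\Pi_{V_\lambda}(x-y)|=|x-y|\,|\Pi_{V_\lambda}(u)|$ with $u=(x-y)/|x-y|$; writing $\eps=\delta/|x-y|$, the claim is equivalent to
\[
\mathcal L^k(\{\lambda\in B(\lambda^0,R):|\Pi_{V_\lambda}(u)|\le\eps\})\le C\eps^r
\]
for every unit vector $u$ and every $\eps>0$, with $R$ and $C$ independent of $u$. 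I would then decompose $u=z_1+z_2$ with $z_1\in V_{\lambda^0}$ and $z_2\in V_{\lambda^0}^\perp$, so that $|z_1|^2+|z_2|^2=1$, and split according to whether $|z_1|\ge 1/\sqrt2$ or $|z_2|\ge1/\sqrt2$.

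In the first case the value $\Pi_{V_\lambda}(u)$ is already bounded below near $\lambda^0$. The smoothness of $V\mapsto\Pi_V$ on the compact manifold $G(n,m)$ together with $\Vert D_\lambda V_\lambda\Vert<C_0$ gives a Lipschitz bound $\Vert\Pi_{V_\lambda}-\Pi_{V_{\lambda^0}}\Vert\le C_1|\lambda-\lambda^0|$, so $|\Pi_{V_\lambda}(u)|\ge|z_1|-C_1|\lambda-\lambda^0|\ge\tfrac1{2\sqrt2}$ on a ball whose radius is uniform in $u$; there the sublevel set is empty for small $\eps$ and trivially bounded for large $\eps$. In the second case I combine the hypotheses: by \eqref{projbound} and then \eqref{bigvol},
\[
\Vert\wedge_r D_\lambda\Pi_{V_{\lambda^0}}(u)\Vert\ge\Vert\wedge_r D_\lambda\Pi_{V_{\lambda^0}}(z_2)\Vert>d|z_2|^r\ge d\,2^{-r/2}=:c,
\]
so $g(\lambda)=\Pi_{V_\lambda}(u)$ satisfies $\Vert\wedge_r Dg(\lambda^0)\Vert\ge c$. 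Everything is thereby reduced to the estimate: if $g\colon B(\lambda^0,R)\to\mathbb R^n$ is $C^1$ with $\Vert\wedge_r Dg\Vert\ge c$ throughout, then $\mathcal L^k(\{|g|\le\eps\})\le C\eps^r$.

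To prove this estimate — the heart of the matter, and where the failure of the area and coarea formulas noted before the proposition ($r<k$ and $r<m$) must be bypassed — I would foliate rather than integrate a Jacobian. Let $P\subset\mathbb R^k$ be the $r$-plane spanned by the right singular vectors of $Dg(\lambda^0)$ attached to its $r$ largest singular values, so that $Dg(\lambda^0)|_P$ has $r$-dimensional Jacobian $\Vert\wedge_r(Dg(\lambda^0)|_P)\Vert\ge c$ and, each singular value being at most $C_1$, smallest singular value at least $c/C_1^{r-1}$. Using the uniform continuity of $\lambda\mapsto Dg(\lambda)$ on a fixed closed ball I would shrink $R$, uniformly in $u$, so that on every leaf $\ell=(\lambda'+P)\cap B(\lambda^0,R)$ the restriction $g|_\ell$ is lower-Lipschitz, $|g(a)-g(b)|\ge c''|a-b|$ for some $c''>0$ depending only on $c$, $C_1$ and $n$. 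Then any $a,b$ in the leafwise sublevel set $S_\ell=\{a\in\ell:|g(a)|\le\eps\}$ obey $c''|a-b|\le|g(a)-g(b)|\le2\eps$, so $S_\ell$ has diameter at most $2\eps/c''$ and hence $\mathcal H^r(S_\ell)\le\omega_r(2\eps/c'')^r$, where $\omega_r$ is the volume of the unit $r$-ball. Integrating over the $(k-r)$-dimensional family of leaves $\lambda'\in P^\perp$ by Fubini for the orthogonal splitting $\mathbb R^k=P\oplus P^\perp$ yields $\mathcal L^k(\{|g|\le\eps\}\cap B)\le\omega_{k-r}R^{k-r}\,\omega_r(2/c'')^r\eps^r=C\eps^r$.

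The step I expect to require the most care is uniformity in the direction $u$, that is, that $R$ and $C$ may be chosen independently of $x$ and $y$; this is precisely what upgrades the argument into a partial transversality estimate. It rests on the hypotheses that $D_\lambda V_\lambda$ is bounded and uniformly continuous: through the smoothness of $V\mapsto\Pi_V$ on the compact Grassmannian these propagate to a modulus of continuity for $\lambda\mapsto Dg(\lambda)=D_\lambda\Pi_{V_\lambda}(u)$ that is independent of the unit vector $u$, the dependence on $u$ being linear. The plane $P$ and the leaves depend on $u$, but since the singular-value lower bound $c$, the Lipschitz constant $c''$ and the modulus of continuity are all uniform in $u$, the resulting $R$ and $C$ are uniform as well, which is exactly the form of the conclusion.
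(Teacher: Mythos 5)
Your proposal is correct and follows essentially the same route as the paper's proof: reduce by homogeneity to a unit vector, decompose it relative to $V_{\lambda^0}$, combine \eqref{projbound} and \eqref{bigvol} to obtain a uniform lower bound on $\Vert\wedge_r D_\lambda\Pi_{V_{\lambda^0}}(z)\Vert$, pick an $r$-dimensional subspace of the parameter space on which the derivative is non-degenerate, foliate $B(\lambda^0,R)$ into parallel $r$-dimensional leaves, bound each leafwise sublevel set by a quantitative injectivity estimate, and conclude with Fubini. The only difference is cosmetic: where the paper invokes the quantitative inverse function theorem of \cite[Lemma 3.1]{JJLL} to control the leafwise maps $T^{\lambda^1}$, you derive the needed lower-Lipschitz bound on the leaves directly from the uniform continuity of the derivative, and you make explicit the easy case (the component in $V_{\lambda^0}$ dominant) that the paper handles implicitly via the minimum point $\hat\lambda$ on each leaf.
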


\begin{proof} We may restrict our consideration to the case 
$|x-y|=1$ and $0<\delta<\delta_0$ for some 
$0<\delta_0<\frac 12$. Let $z\in\mathbb R^n$ be such that $|z|=1$ and 
$|\Pi_{V_{\lambda^0}}(z)|<\delta$. Writing
$z=z_1+z_2$ where $z_1=\Pi_{V_{\lambda^0}}(z)\in V_{\lambda^0}$ and 
$z_2\in V_{\lambda_0}^\perp$, we have $|z_2|>\frac 12$
since $|z_1|<\delta<\frac 12$. By 
\eqref{projbound} and \eqref{bigvol}
\[
\Vert\wedge_r D_\lambda\Pi_{V_{\lambda^0}}(z)\Vert
  \ge\Vert\wedge_r D_\lambda\Pi_{V_{\lambda^0}}(z_2)\Vert>2^{-r}d,
\]
implying the existence of an $r$-dimensional subspace $U\subset\mathbb R^k$ 
such that the restriction of $D_\lambda\Pi_{V_{\lambda^0}}(z)$ to $U$ is 
injective and
$|\det(D_\lambda\Pi_{V_{\lambda^0}}(z)\vert_U)|>2^{-r}d$.
Since the mapping $\lambda\mapsto V_\lambda$ has uniformly continuous 
derivative and the mapping $V\mapsto\Pi_V(z)$ is smooth there exists $R'>0$ 
such that the restriction of $D_\lambda\Pi_{V_{\lambda^1}}(z)$ to $U$ is 
injective (with the same lower bound for the derivative as above) 
for all $\lambda^1\in(\lambda^0+U^\perp)\cap B(\lambda^0,R')$. We denote
by $T^{\lambda^1}$ the restriction of the mapping 
$\lambda\mapsto\Pi_{V_\lambda}(z)$ to $(\lambda^1+U)\cap\Lambda$.

By the above arguments, $C_1<|\det DT^{\lambda^1}|<C_2$ for some constants 
$C_1>0$ and $C_2>0$. Combining this with $\Vert D_\lambda V_\lambda\Vert<C_0$,
we obtain that the singular values of $DT^{\lambda^1}$ are uniformly bounded 
from above and below. Therefore,   
a quantitative version of the inverse function theorem \cite[Lemma 3.1]{JJLL} 
gives that there exist $a>0$ and $R>0$ such that for all 
$\lambda^1\in(\lambda^0+U^\perp)\cap B(\lambda^0,\tfrac{R'}2)$ and for all
$\lambda\in(\lambda^1+U)\cap B(\lambda^1,\tfrac{R'}2)$  the mapping 
$T^{\lambda^1}$ is a diffeomorphism onto its image in $B(\lambda,R)$, and 
moreover, the inclusion 
\begin{equation}\label{lipschitz}
B(T^{\lambda^1}(\lambda),a\rho)\cap T^{\lambda^1}(B(\lambda,3R))\subset 
  T^{\lambda^1}(B(\lambda,\rho))
\end{equation}
is valid for all $0<\rho<3R$. For each 
$\lambda^1\in(\lambda^0+U^\perp)\cap B(\lambda^0,R)$, let $\hat\lambda$ be a 
minimum point of $|T^{\lambda^1}|$ in $B(\lambda^1,R)$. We may assume that 
$|T^{\lambda^1}(\hat\lambda)|\le\delta$. By \eqref{lipschitz} we have 
$|T^{\lambda^1}(\lambda)|>\delta$ for any 
$\lambda\in\big((\lambda^1+U)\cap B(\lambda^1,R)\big)\setminus 
  B(\hat\lambda,2\delta a^{-1})$. 
Thus for any $\lambda^1\in(\lambda^0+U^\perp)\cap B(\lambda^0,R)$ we obtain
\[
\mathcal L^r(\{\lambda\in (\lambda^1+U)\cap B(\lambda^0,R)\mid 
  |\Pi_{V_\lambda}(z)|\le\delta\})\le\mathcal L^r(B(0,1))\left(\frac 2a\right)^r
    \delta^r.
\]
The claim follows by Fubini's theorem.
\end{proof}  

In the following lemma we compare projections onto $m$-planes to those 
onto certain
extended $(m+p)$-planes.

\begin{lemma}\label{extendedder}
Let $a,b\in\mathbb R$ and let $V_\cdot:(a,b)\to G(n,m)$ be continuously
differentiable. Assume that $c\in (a,b)$, $p$ is an integer with $0<p<n-m$ 
and $U\subset V_c^\perp$ is a $p$-plane. Then there exist $a',b'\in (a,b)$ 
such that 
$c\in (a',b')$ and the function $\widetilde V_\cdot:(a',b')\to G(n,m+p)$ 
defined by 
$\widetilde V_s=\langle V_s,U\rangle$ for $s\in (a',b')$ is well-defined, 
continuously differentiable and 
\[
\left.\frac{\partial\Pi_{V_s}(z)}{\partial s}\right|_{s=c}
   =\left.\frac{\partial\Pi_{\widetilde V_s}(z)}{\partial s}\right|_{s=c}
\]
for all $z\in\langle V_c,U\rangle^\perp$. 
\end{lemma}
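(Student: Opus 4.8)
The plan is to fix an orthonormal basis of $\mathbb R^n$ adapted to the data at $s=c$, settle the well-definedness and $C^1$ regularity of $\widetilde V_\cdot$ by an openness argument, and then read off both derivatives in the rotation coordinates of Remark~\ref{localcoor}. First I would pick an orthonormal basis $\{e_1,\dots,e_n\}$ of $\mathbb R^n$ with $V_c=\langle e_1,\dots,e_m\rangle$ and $U=\langle e_{m+1},\dots,e_{m+p}\rangle$; this is possible because $U\subset V_c^\perp$, and it gives $\widetilde V_c=\langle V_c,U\rangle=\langle e_1,\dots,e_{m+p}\rangle$ together with $\widetilde V_c^\perp=\langle e_{m+p+1},\dots,e_n\rangle$ (a nontrivial space since $m+p<n$). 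As $V_c\cap U=\{0\}$ and the condition $V_s\cap U=\{0\}$, equivalently $\dim\langle V_s,U\rangle=m+p$, is open, continuity of $s\mapsto V_s$ produces an interval $(a',b')\ni c$ on which $\widetilde V_s=\langle V_s,U\rangle$ is a genuine $(m+p)$-plane. Composing the $C^1$ curve $s\mapsto V_s$ with the span map $(V,W)\mapsto\langle V,W\rangle$, which is smooth on the transversal locus of $G(n,m)\times G(n,p)$, and holding the second argument fixed at $U$, yields the continuous differentiability of $\widetilde V_\cdot$.

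Next I would set up the rotation coordinates of Remark~\ref{localcoor} on both manifolds at once: angles $\alpha_{ij}$ ($1\le i\le m$, $m<j\le n$) around $V_c$ in $G(n,m)$, and angles $\beta_{ij'}$ ($1\le i\le m+p$, $m+p<j'\le n$) around $\widetilde V_c$ in $G(n,m+p)$. Let $s\mapsto\alpha(s)$ and $s\mapsto\beta(s)$ represent $s\mapsto V_s$ and $s\mapsto\widetilde V_s$, with $\alpha(c)=0$ and $\beta(c)=0$. The heart of the argument is to match the two velocities. The rotation frame $f_i(s)=\prod_{j}R^{ij}(\alpha_{ij}(s))e_i$ of $V_s$ satisfies $f_i(c)=e_i$ and $\dot f_i(c)=\sum_{j=m+1}^n\dot\alpha_{ij}(c)\,e_j$, and $\widetilde V_s$ is spanned by $f_1(s),\dots,f_m(s)$ together with the fixed vectors $e_{m+1},\dots,e_{m+p}$. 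Since this spanning frame is orthonormal at $s=c$, the Grassmann tangent vector of $\widetilde V_s$ at $c$ sends each frame vector at $c$ to the orthogonal projection of its velocity onto $\widetilde V_c^\perp$; carrying this out gives
\[
\dot\beta_{ij'}(c)=\dot\alpha_{ij'}(c)\quad(1\le i\le m),\qquad
\dot\beta_{ij'}(c)=0\quad(m<i\le m+p),
\]
for every $j'\in\{m+p+1,\dots,n\}$, the second identity because the vectors $e_{m+1},\dots,e_{m+p}$ spanning $U$ do not move and are absorbed into $\widetilde V_c$.

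Finally I would expand both derivatives for $z\in\widetilde V_c^\perp$ via the chain rule and \eqref{linearmap}. As $z\in\widetilde V_c^\perp\subset V_c^\perp$ has $z_j=0$ for $m<j\le m+p$, the relation $\tfrac{\partial\Pi_{V(\alpha)}(z)}{\partial\alpha_{ij}}\big|_{\alpha=0}=z_je_i$ yields
\[
\left.\frac{\partial\Pi_{V_s}(z)}{\partial s}\right|_{s=c}
=\sum_{i=1}^m\sum_{j'=m+p+1}^n\dot\alpha_{ij'}(c)\,z_{j'}e_i,
\]
while the corresponding expansion for $\widetilde V_s$, once the terms with $m<i\le m+p$ are dropped as vanishing, reduces by the coordinate identities to exactly the same sum, proving the claimed equality. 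The main obstacle is this middle step: one must justify that the spanning frame $f_1,\dots,f_m,e_{m+1},\dots,e_{m+p}$, which is no longer orthonormal for $s\ne c$, still computes the Grassmann tangent vector correctly, and then verify that the span operation reproduces the $V_s$-rotations in the $\widetilde V_c^\perp$-directions while killing the $U$-directions. The geometric content is that the first-order discrepancy between $V_s$ and $\widetilde V_s$ lives entirely in $U$, which is invisible to any $z\in\widetilde V_c^\perp$; once this bookkeeping is in place, the equality of the derivatives is immediate.
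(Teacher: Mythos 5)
Your proposal is correct, but it proves the lemma by a genuinely different route than the paper. The paper's proof is a coordinate-free, second-order metric estimate: for $z\in\langle V_c,U\rangle^\perp$ one has $\Pi_{\widetilde V_s}(z)\in U^\perp\cap\widetilde V_s$, an $m$-plane within distance $\mathcal O(s-c)$ of $V_s$; combining this with $\Pi_{V_s}(z)=\Pi_{V_s}(\Pi_{\widetilde V_s}(z))$ and the fact that both $|\Pi_{V_s}(z)|$ and $|\Pi_{\widetilde V_s}(z)|$ are $\mathcal O(s-c)$, the angle between the two projections is $\mathcal O(s-c)$, whence $|\Pi_{V_s}(z)-\Pi_{\widetilde V_s}(z)|=\mathcal O((s-c)^2)$; since both projections vanish at $s=c$, the derivatives coincide. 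You instead compute in the rotation charts of Remark~\ref{localcoor} on \emph{both} Grassmannians, match the coordinate velocities ($\dot\beta_{ij'}(c)=\dot\alpha_{ij'}(c)$ for $i\le m$, $\dot\beta_{ij'}(c)=0$ for $m<i\le m+p$), and expand both derivatives via \eqref{linearmap} and the chain rule. The step you flag as the main obstacle is in fact a routine and standard verification: for any $C^1$ selection $\gamma(s)\in\widetilde V_s$ with $\gamma(c)=v$, writing $\gamma(s)$ in the coordinate frame of $\widetilde V_s$ and differentiating at $s=c$ shows that the in-plane component of $\dot\gamma(c)$ is annihilated by $\Pi_{\widetilde V_c^\perp}$, so the tangent map is $v\mapsto\Pi_{\widetilde V_c^\perp}(\dot\gamma(c))$ irrespective of the frame being orthonormal only at $s=c$; applying this to the moving frame of $V_s$ and to the frozen vectors spanning $U$ yields exactly your two velocity identities. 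As for what each approach buys: the paper's argument is shorter and avoids coordinates, though its angle estimates are stated rather tersely; yours is more explicit, produces a closed formula for the common derivative, makes the geometric mechanism transparent (the first-order discrepancy between $V_s$ and $\widetilde V_s$ is confined to $U$, hence invisible to $z\perp U$), and dovetails with how the lemma is actually invoked in \eqref{part1} of the proof of Theorem~\ref{projresult}, where the same rotation coordinates are in play.
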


\begin{proof}
By the continuity of $V_\cdot$, there exists a neighbourhood $(a',b')\ni c$
such that $V_s\cap U=\{0\}$ for all $s\in (a',b')$ implying that 
$\widetilde V_s\in G(n,m+p)$ is well-defined. Clearly, $\widetilde V_\cdot$ is
continuously differentiable. Note that for 
$z\in\langle V_c,U\rangle^\perp$ we have
$\Pi_{\widetilde V_s}(z)\in U^\perp\cap\widetilde V_s$. Furthermore, 
$U^\perp\cap\widetilde V_s$ is an $m$-dimensional plane having distance of 
order $\mathcal O(s-c)$ to 
$V_s$ since the distances of $V_c$ to $V_s$ and
to $U^\perp\cap\widetilde V_s$ are of order $\mathcal O(s-c)$. Combining this 
with the fact $\Pi_{V_s}(z)=\Pi_{V_s}(\Pi_{\widetilde V_s}(z))$, we conclude that
the angle $\theta_s$ between $\Pi_{V_s}(z)$ and $\Pi_{\widetilde V_s}(z)$ is of 
order $\mathcal O(s-c)$. Clearly, both $|\Pi_{V_s}(z)|$ and 
$|\Pi_{\widetilde V_s}(z)|$ are of order $\mathcal O(s-c)$, and therefore, 
$|\Pi_{V_s}(z)-\Pi_{\widetilde V_s}(z)|=|\sin\theta_s\Pi_{\widetilde V_s}(z)|$
is of order $\mathcal O ((s-c)^2)$ giving the claim.
\end{proof}

Now we are ready to prove Theorem~\ref{projresult}. We will apply 
Lemma~\ref{lemma1} to a parametrized family of projections onto $(m+p)$-planes 
for a suitable $p$. The role of Proposition~\ref{dotbound} is to imply
that the assumptions of Lemma~\ref{lemma1} are valid. However, the 
dimension $k$ of the parameter space is too small to guarantee the validity 
of the assumptions of 
Proposition~\ref{dotbound}. To overcome this problem we extend the 
parameter space, and for this purpose, we need the local coordinates defined in
Remark~\ref{localcoor}.

\begin{proof}[Proof of Theorem~\ref{projresult}]
Let $l$ be an integer with $0\le l\le m-1$.  By 
\eqref{naturalbounds} we may assume that $p(l)<n-m$. Writing 
$p=p(l)$, it follows from \eqref{p} (see also Remark~\ref{full} c) and 
Figure~\ref{pic5}) that
\begin{equation}\label{klimits}
l(n-m)+(n-m-p-1)(m-l)<k\le l(n-m)+(n-m-p)(m-l).
\end{equation}
Consider $\lambda^0\in\Lambda$.
It is clearly enough to prove the claim in $B(\lambda^0,R)$ for some $R>0$
such that $B(\lambda^0,R)\subset\Lambda$. 
Let $R'>0$ be such that $\overline B(\lambda^0,R')\subset\Lambda$, where 
$\overline B(\lambda^0,R')$ is the closure of $ B(\lambda^0,R')$. 
By Remark~\ref{vgenerates} the formula
\[
A_i(z)=\left.\frac{\partial \Pi_{V_\lambda}(z)}{\partial\lambda_i}
  \right|_{\lambda=\lambda^0}=D_VF(V_{\lambda^0},z)\circ D_{\lambda}V_\lambda(u_i)
\]
defines a linear map $A_i:V_{\lambda^0}^\perp\to V_{\lambda^0}$ for all 
$i=1,\dots,k$, where $\{u_1,\dots,u_k\}$ is the natural basis of $\mathbb R^k$.
Letting $t=n-m-p$, we have by \eqref{klimits} that $k>m(t-1)+l(n-m-t+1)$. 
Since $D_\lambda V_\lambda$ is injective and 
$D_VF(V_{\lambda^0},\cdot)$ is bijective (see Remark~\ref{vgenerates}), the
assumptions of Lemma~\ref{independent} are valid. 
We denote by
$\{\hat e_{m+1},\dots,\hat e_{m+t}\}$ an orthonormal basis of the space 
$W\subset V_{\lambda^0}^\perp$ given by Lemma~\ref{independent} and by
$\{\hat e_{m+1},\dots,\hat e_{m+t},\hat e_{m+t+1},\dots,\hat e_n\}$ the extension
of it to an orthonormal basis of $V_{\lambda^0}^\perp$. Letting 
$\delta<\frac 12$, define an extended parameter space 
$\widetilde\Lambda=B(\lambda^0,R')\times\prod_{i=m+t+1}^n\prod_{j=m+1}^{m+t} 
  ]-\delta,\delta[$
and write $\tilde\lambda=(\tilde\lambda^1,\tilde\lambda^2)\in\widetilde\Lambda$
where $\tilde\lambda^1\in B(\lambda^0,R')$ and 
$\tilde\lambda^2\in]-\delta,\delta[^{pt}$. 

For each $\tilde\lambda\in\widetilde\Lambda$, define an $(m+p)$-dimensional 
plane 
\[
\widetilde V_{\tilde\lambda}=\langle V_{\tilde\lambda^1},
  \hat e_{m+t+1}(\tilde\lambda^2),\dots,\hat e_n(\tilde\lambda^2)\rangle
\]
where 
$\hat e_i(\tilde\lambda^2)=\prod_{j=m+1}^{m+t}R^{ij}((\tilde\lambda^2)_{ij})
  \hat e_i$ 
is as in Remark~\ref{localcoor} for all $i=m+t+1,\dots,n$. Decreasing $R'$ if
necessary guarantees that the plane $\widetilde V_{\tilde\lambda}$ is 
$(m+p)$-dimensional for every $\tilde\lambda\in\widetilde\Lambda$. 
In this way we obtain a $\tilde k$-dimensional family 
$\widetilde V_{\tilde\lambda}$ of $(m+p)$-planes, where $\tilde k=k+pt$. For 
$i=1,\dots,\tilde k$, the indices
$1,\dots,k$ correspond to $\tilde\lambda^1$ and the remaining indices 
$k+1,\dots,\tilde k$ correspond to $\tilde\lambda^2$.

Note that $\widetilde V_{\tilde\lambda^0}^\perp=W$ where 
$\tilde\lambda^0=(\lambda^0,0)$. Since 
$\hat e_j(\tilde\lambda)$ is independent of $\tilde\lambda^1$ for 
$j=m+t+1,\dots,n$ we conclude from Lemma~\ref{extendedder} that
\begin{equation}\label{part1}
\left.\frac{\partial\Pi_{\widetilde V_{\tilde\lambda}}(z)}{\partial\tilde\lambda_i}
  \right|_{\tilde\lambda=\tilde\lambda^0}
  =\left.\frac{\partial \Pi_{V_\lambda}(z)}{\partial\lambda_i}
  \right|_{\lambda=\lambda^0}=A_i(z)\in V_{\lambda^0}\subset
  \widetilde V_{\tilde\lambda^0}
\end{equation}
for all $z\in W$ and for all $i\in\{1,\dots,k\}$. 
Fix $z\in W$ and let $j_1,\dots,j_{l+1}\in\{1,\dots,k\}$ be the indices for
which \eqref{bigvol2} is satisfied with $d'>0$. Let 
$|z_{j_0}|=\max_j\{|z_j|\}$. 
Denote by $j_{l+2},\dots,j_{l+1+p}\in\{k+1,\dots,\tilde k\}$ the indices
determined by $(\tilde\lambda^2)_{ij_0}$, where $i=m+t+1,\dots,n$. By 
the definition of the extension and Remark~\ref{localcoor},
we obtain for $h=l+2,\dots,l+1+p$
\begin{equation}\label{part2}
\left.\frac{\partial\Pi_{\widetilde V_{\tilde\lambda}}(z)}
   {\partial\tilde\lambda_{j_h}}\right|_{\tilde\lambda=\tilde\lambda^0}
   =z_{j_0}\hat e_i\in V_{\lambda^0}^\perp\cap\widetilde V_{\tilde\lambda^0},
\end{equation}
where $i$ is determined by $h$. Let 
$\xi=u_{j_1}\wedge\dots\wedge u_{j_{l+1+p}}$, where $\{u_1,\dots,u_{\tilde k}\}$
is the natural basis of $\mathbb R^{\tilde k}$. Now \eqref{part1}, \eqref{part2},
\eqref{perpnorm} and the fact that
$|z_{j_0}|\ge\frac{|z|}{\sqrt t}$ combine to give for $r=l+1+p$ that
\begin{align*}
&\Vert\wedge_r D_{\tilde\lambda}\Pi_{\widetilde V_{\tilde\lambda^0}}(z)\Vert
  \ge\Vert\wedge_r D_{\tilde\lambda}\Pi_{\widetilde V_{\tilde\lambda^0}}(z)(\xi)
  \Vert\\
&=\Vert A_{j_1}(z)\wedge\dots\wedge A_{j_{l+1}}(z)\Vert\cdot
  \Vert z_{j_0}\hat e_{m+t+1}\wedge\dots\wedge z_{j_0}\hat e_n\Vert
  >\frac{d'}{(\sqrt t)^p}|z|^{l+1+p}.
\end{align*}
Hence, the assumptions of Proposition~\ref{dotbound} are valid for the
extended family 
$\{\Pi_{\widetilde V_{\tilde\lambda}}\mid\tilde\lambda\in\widetilde\Lambda\}$
(the bounds are uniform since we consider only the compact 
set $\overline B(\lambda^0,R')$).

Applying Proposition~\ref{dotbound} to the family 
$\{\Pi_{\widetilde V_{\tilde\lambda}}\mid\tilde\lambda\in\widetilde\Lambda\}$
implies that the assumptions of Lemma~\ref{lemma1} are valid for the family
$\{\Pi_{\widetilde V_{\tilde\lambda}}\mid\tilde\lambda\in B(\tilde\lambda^0,R)\cap
  \widetilde\Lambda\}$,
where $R$ is as in Proposition~\ref{dotbound}.
Under the assumption $\dim\mu\le r$ Lemma~\ref{lemma1} gives
$\dim(\Pi_{\widetilde V_{\tilde\lambda}})_\ast\mu=\dim\mu$
for 
$\mathcal L^{\tilde k}$-almost all 
$\tilde\lambda\in B(\tilde\lambda^0,R)\cap\widetilde\Lambda$. 
Moreover, from \eqref{naturalbounds} we deduce that 
\begin{equation}\label{added}
\dim(\Pi_{V_{\tilde\lambda^1}}\circ\Pi_{\widetilde V_{\tilde\lambda}})_\ast\mu
\ge\dim(\Pi_{\widetilde V_{\tilde\lambda}})_\ast\mu-p
\end{equation}
for every $\tilde\lambda$.
Observing that
\begin{equation}\label{added2}
\Pi_{V_{\tilde\lambda^1}}=\Pi_{V_{\tilde\lambda^1}}\circ
  \Pi_{\widetilde V_{\tilde\lambda}},
\end{equation}
the first inequality in \eqref{lowerlimit} follows from Fubini's theorem. The 
second inequality in \eqref{lowerlimit} can be verified similarly:  By 
Lemma~\ref{lemma1} we have $\dim(\Pi_{\widetilde V_{\tilde\lambda}})_\ast\mu\ge r$
for $\mathcal L^{\tilde k}$-almost all 
$\tilde\lambda\in B(\tilde\lambda^0,R)\cap\widetilde\Lambda$ provided that
$\dim\mu>r$. As before, \eqref{added}, \eqref{added2} and Fubini's theorem
combine to give the second inequality in \eqref{lowerlimit}.
Finally, assuming that $\dim\mu>p(m-1)+m$, we get from Lemma~\ref{lemma1}
that for $\mathcal L^{\tilde k}$-almost all 
$\tilde\lambda\in B(\tilde\lambda^0,R)\cap\widetilde\Lambda$ the projected
measure $(\Pi_{\widetilde V_{\tilde\lambda}})_\ast\mu$ is absolutely continuous 
with respect to $\mathcal H^{p(m-1)+m}$, and therefore, 
$(\Pi_{V_{\tilde\lambda^1}}\circ\Pi_{\widetilde V_{\tilde\lambda}})_\ast\mu$ is 
absolutely continuous with respect to $\mathcal H^m$. Again, the claim follows 
from \eqref{added} and Fubini's theorem.

It remains to prove that the lower bounds and the condition for the absolute
continuity are the best possible ones.
Let $l$, $p$ and $k$ be as in \eqref{klimits} and fix $0\le s\le 1$. We start
by constructing a $k$-dimensional family
$\{\Pi_{V_\lambda}\mid\lambda\in\Lambda\}$ of projections and a measure $\mu$ on 
$\mathbb R^n$ with $\dim\mu=p+l+s$ such that 
\[
\dim(\Pi_{V_\lambda})_*\mu=\dim\mu-p
\] 
for $\mathcal L^k$-almost all $\lambda\in\Lambda$. 
Let $\Lambda=]-\delta,\delta[^k$
and consider a family $\{\Pi_{V_\lambda}\mid\lambda\in\Lambda\}$  
constructed similarly as the above extension using the rotations illustrated in 
Figure~\ref{pic5}. Define
$\mu=\nu_1\times\nu_2$ where $\nu_1$ is a $s$-dimensional measure on the 
space spanned by $e_{l+1}$ and $\nu_2$ is the restriction of 
$\mathcal L^{l+p}$ to the unit ball of the space  
$X=\langle e_1,\dots,e_l,e_{n-p+1},\dots,e_n\rangle$. Then $\dim\mu=l+p+s$. 
Since $e_j(\lambda)\in X^\perp$ for all $\lambda\in\Lambda$ and for all 
$j=l+1,\dots,m$, we have $(\Pi_{V_\lambda})_*\nu_2=(\Pi_{W_\lambda})_*\nu_2$ where
$W_\lambda=\langle e_1(\lambda),\dots,e_l(\lambda)\rangle$, and therefore,
$\dim(\Pi_{V_\lambda})_*\nu_2\le l$ for all $\lambda\in\Lambda$. The fact that
$\dim(\Pi_{V_\lambda})_*\nu_1\le s$ gives
$\dim(\Pi_{V_\lambda})_\ast\mu\le l+s$ for all $\lambda\in\Lambda$ implying the 
sharpness of the first inequality in \eqref{lowerlimit}. 

The sharpness of the
second inequality in \eqref{lowerlimit} is verified similarly
by letting $\mu$ to
be any measure on $X$ with $p(l-1)+l\le\dimH\mu\le p(l)+l$. Finally, if
$l=m-1$, define $\mu=\nu_1\times\nu_2$ where $\nu_1$ is 
the 1-dimensional Hausdorff measure restricted to the 1-dimensional four corner
Cantor set in $\langle e_m,e_{m+1}\rangle$ and $\nu_2$ is as above. Then for 
$\mathcal L^k$-almost all $\lambda\in\Lambda$ the projection of $\mu$ to
$\langle e_m(\lambda)\rangle$ is singular with respect to $\mathcal H^1$ on 
$\langle e_m(\lambda)\rangle$, implying the singularity of 
$(\Pi_{V_\lambda})_*\mu$ with respect to $\mathcal H^m$ on $V_\lambda$.   
\end{proof}

\begin{remark}\label{bestone}
a) The lower bounds given in Theorem~\ref{projresult} are the 
best possible ones in the sense that for each $d$ there exist a measure $\mu$ 
with $\dim\mu=d$ and a family of projections such that the lower bounds are 
achieved. However, this does not mean that for any family and any $d$ one 
could construct such a measure. Different families have different
lower bounds - even in the case $k=1$, see \cite[Remark 3.5]{JJLL}.

b) In the setting of \cite{F2} the study of non-existence of
Besicovitch $(n,k)$-sets leads to
a $k$-dimen\-sional family of projections from $\mathbb R^{(k+1)(n-k)}$ onto 
$\mathbb R^{n-k}$. The set (or the measure) one is projecting is 
$k(n-k)$-dimensional. The essential step is to show that  
projections have positive measure for almost all parameters.
In the case of Besicovitch $(n,n-1)$-sets $k=n-1$, which leads to an 
$(n-1)$-dimensional family of projections from $\mathbb R^n$ onto 
$\mathbb R^1$. The set one is projecting is $(n-1)$-dimensional. According to 
Theorem~\ref{projresult}, projections have positive measure
provided that $n-1>1$ implying that there are no Besicovitch 
$(n,n-1)$-sets for $n\ge 3$. For other values of $k$ the dimension of the
parameter space is too small in order to apply Theorem~\ref{projresult}. 

There exist valid proofs for the non-existence of 
Besicovitch $(n,k)$-sets for $k>\frac n2$ by Falconer \cite{F1} and for
$2^{k-1}+k\ge n\ge 3$ by Bourgain \cite{B} (see also \cite{O}). However, the 
method of \cite{F2} would be more elementary in the sense that it does not
use Fourier transform.  
\end{remark}

\section{Families of smooth maps}\label{generalfamilies}

In this section we discuss the extension of Theorem~\ref{projresult} 
to families of smooth maps from $\mathbb R^n$ to $\mathbb R^m$.
Note that an orthogonal projection is uniquely determined by its kernel,
and moreover, the restriction of a linear map to the orthogonal complement of
its kernel is a diffeomorphism onto its image. These 
simple observations lead to the following definition.

\begin{definition}\label{defgeneralfull}
Let $\Lambda\subset\mathbb R^k$ be open and let 
$\mathcal F=\{F_\lambda:\mathbb R^n\to\mathbb R^m\mid\lambda\in\Lambda\}$ 
be a family of $C^2$-maps. 
Define $V_\lambda^x:=\ker(D_xF_\lambda(x))^\perp$. The family $\mathcal F$ is 
\emph{non-degenerate} if the following conditions are satisfied:
\begin{enumerate}
\item The plane $V_\lambda^x$ is $m$-dimensional for all $x\in\mathbb R^n$ and
$\lambda\in\Lambda$ and the family 
$\{\Pi_{V_\lambda^x}\mid\lambda\in\Lambda\}$ is non-degenerate for all 
$x\in\mathbb R^n$.
\item The map $x\mapsto D_\lambda\Pi_{V_\lambda^x}$ is continuous.
\item There exist constants $C_1,C_2>0$ such that 
$\Vert D_\lambda D_xF_\lambda(x)\Vert\le C_1$ and 
$\Vert D_\lambda D_x^2F_\lambda(x)\Vert\le C_2$ for all $x\in\mathbb R^n$ and
$\lambda\in\Lambda$. 
\end{enumerate}
Here the derivatives with respect to $x$ and $\lambda$ are denoted by $D_x$ and
$D_\lambda$, respectively, and the norm of a linear map is denoted by
$\Vert\cdot\Vert$. 
\end{definition}

\begin{remark}\label{genfull}
Restricting our consideration to compact sets $K_1\subset\Lambda$ and 
$K_2\subset\mathbb R^n$, we may assume that the constants in 
Definition~\ref{defgeneralfull} are independent of $x$, the map 
$x\mapsto D_\lambda\Pi_{V_\lambda^x}$ is uniformly continuous and there exists 
a constant $d>0$ such that $|\det D_xF_\lambda(x)\vert_{V_\lambda^x}|>d$.
Condition (3) is valid if $D_\lambda D_xF_\cdot(\cdot)$ and
$D_\lambda D_x^2F_\cdot(\cdot)$ are assumed to be continuous.
\end{remark}

\begin{theorem}\label{generalresult}
Let $\Lambda\subset\mathbb R^k$ be an open set and let 
$\mu$ be a finite Radon measure on $\mathbb R^n$ with compact support.
Assume that the family 
$\mathcal F=\{F_\lambda:\mathbb R^n\to\mathbb R^m\mid\lambda\in\Lambda\}$ of 
$C^2$-maps is non-degenerate. Then for all $l=0,\dots,m-1$ and 
for $\mathcal L^k$-almost all $\lambda\in\Lambda$ 
\begin{equation}\label{lowerlimit2}
\dim (F_\lambda)_*\mu\ge
\begin{cases} \dim\mu-p(l), &\text{if }p(l)+l\le\dim\mu\le p(l)+l+1,\\
               l+1, &\text{if }p(l)+l+1\le\dim\mu\le p(l+1)+l+1,
\end{cases}
\end{equation}
where $p(l)$ is as in \eqref{p}. Furthermore, for $\mathcal L^k$-almost all 
$\lambda\in\Lambda$ the image measure $(F_\lambda)_*\mu$ is absolutely 
continuous with respect $\mathcal H^m$ provided that $\dim\mu>p(m-1)+m$.
The lower bounds given in \eqref{lowerlimit2} and the condition for the 
absolute continuity are the best possible ones.
\end{theorem}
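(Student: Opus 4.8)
The plan is to follow the proof of Theorem~\ref{projresult} almost verbatim, replacing the projection increment $\Pi_{V_\lambda}(x-y)$ everywhere by $F_\lambda(x)-F_\lambda(y)$ and exploiting the observation recorded before Definition~\ref{defgeneralfull} that near any point $F_\lambda$ is the orthogonal projection onto $V_\lambda^x=\ker(D_xF_\lambda(x))^\perp$ followed by a diffeomorphism. First I would invoke Remark~\ref{genfull} to pass to compact sets $K_1\subset\Lambda$ and $K_2\supset\spt\mu$ on which the constants of Definition~\ref{defgeneralfull} are uniform, $x\mapsto D_\lambda\Pi_{V_\lambda^x}$ is uniformly continuous and $|\det D_xF_\lambda(x)\vert_{V_\lambda^x}|>d$. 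As before it is enough to argue locally, so I fix $x^0\in\spt\mu$, $\lambda^0\in\Lambda$ and a small ball $B(x^0,\varepsilon)$, and set $p=p(l)$, $t=n-m-p$ and $r=l+1+p$, so that \eqref{klimits} holds.

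The heart of the matter is a smooth-map version of Proposition~\ref{dotbound}: for a non-degenerate projection family the transversality lower bound \eqref{bigvol} must be converted into the small-measure estimate \eqref{smallmeasure} for $F_\lambda(x)-F_\lambda(y)$ with exponent $r$ and $x\ne y\in B(x^0,\varepsilon)$. To produce it I would Taylor expand $F_\lambda(x)-F_\lambda(y)=D_xF_\lambda(x)(x-y)+O(|x-y|^2)$, the remainder and its $\lambda$-derivative being uniform in $\lambda$ by condition~(3). Since $\ker D_xF_\lambda(x)=(V_\lambda^x)^\perp$, I factor $D_xF_\lambda(x)=B_\lambda^x\circ\Pi_{V_\lambda^x}$ with $B_\lambda^x=D_xF_\lambda(x)\vert_{V_\lambda^x}$ an isomorphism onto its image with bounded singular values and $|\det B_\lambda^x|>d$; differentiating $\lambda\mapsto D_xF_\lambda(x)(z)=B_\lambda^x(\Pi_{V_\lambda^x}(z))$ at $\lambda^0$ for $z\in(V_{\lambda^0}^x)^\perp$ kills the term carrying $D_\lambda B_\lambda^x$ because $\Pi_{V_{\lambda^0}^x}(z)=0$, so that
\[
\left.D_\lambda\bigl(D_xF_\lambda(x)(z)\bigr)\right|_{\lambda^0}
  =B_{\lambda^0}^x\bigl(\left.D_\lambda\Pi_{V_\lambda^x}(z)\right|_{\lambda^0}\bigr),
  \qquad z\in(V_{\lambda^0}^x)^\perp.
\]
Hence the $r$-th exterior power of the left-hand side has norm comparable to $\|\wedge_rD_\lambda\Pi_{V_{\lambda^0}^x}(z)\|$, and a transversality bound for the non-degenerate family $\{\Pi_{V_\lambda^{x^0}}\mid\lambda\in\Lambda\}$ supplied by condition~(1) transfers — up to the $O(|x-y|^2)$ remainder and, via the uniform continuity in condition~(2), up to replacing $x^0$ by $x$ — to the increment of $F$, provided $\varepsilon$ is small. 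The proof of Proposition~\ref{dotbound} (its decomposition argument together with the quantitative inverse function theorem and Fubini) then yields \eqref{smallmeasure} for all directions.

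Granting this, the rest copies the proof of Theorem~\ref{projresult}. Applying Lemma~\ref{independent} to $A_i(z)=\left.\partial_{\lambda_i}\Pi_{V_\lambda^{x^0}}(z)\right|_{\lambda^0}$ and \eqref{klimits} produces a $t$-plane $W\subset(V_{\lambda^0}^{x^0})^\perp$ with large exterior products, and I extend the parameters by $pt$ rotation coordinates $\tilde\lambda^2$ as in Remark~\ref{localcoor}, forming the extended $C^2$ family
\[
\widetilde F_{\tilde\lambda}=\bigl(F_{\tilde\lambda^1},\,\langle\,\cdot\,,\hat e_{m+t+1}(\tilde\lambda^2)\rangle,\dots,\langle\,\cdot\,,\hat e_n(\tilde\lambda^2)\rangle\bigr)\colon\mathbb R^n\to\mathbb R^{m+p},
\]
whose kernel complement at each point is the extended plane $\langle V_{\tilde\lambda^1}^x,\hat e_{m+t+1}(\tilde\lambda^2),\dots,\hat e_n(\tilde\lambda^2)\rangle$. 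Exactly as in \eqref{part1} and \eqref{part2}, Lemma~\ref{extendedder} and \eqref{perpnorm} give the lower bound \eqref{bigvol} for the extended family with $z\in W$, so the smooth analogue above applies (now with $r\le m+p$) and, through Lemma~\ref{lemma1} — whose range hypothesis is automatic here since $\widetilde F_{\tilde\lambda}$ is a submersion onto $\mathbb R^{m+p}$ — gives $\dim(\widetilde F_{\tilde\lambda})_*\mu=\dim\mu$, respectively $\dim(\widetilde F_{\tilde\lambda})_*\mu\ge r$, respectively absolute continuity with respect to $\mathcal H^{m+p}$, for $\mathcal L^{\tilde k}$-almost all $\tilde\lambda$. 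Since the coordinate projection $\pi_m\colon\mathbb R^{m+p}\to\mathbb R^m$ satisfies $\pi_m\circ\widetilde F_{\tilde\lambda}=F_{\tilde\lambda^1}$, the bounds \eqref{naturalbounds} and Fubini's theorem in the variable $\tilde\lambda^2$ deliver \eqref{lowerlimit2} and the absolute continuity statement. The sharpness is inherited: the extremal families and measures at the end of the proof of Theorem~\ref{projresult} are built from orthogonal projections, which, viewed as maps into $\mathbb R^m$, are non-degenerate in the sense of Definition~\ref{defgeneralfull}.

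The step I expect to be the main obstacle is the smooth-map version of Proposition~\ref{dotbound}, specifically the demand that the second-order Taylor remainder of $F_\lambda(x)-F_\lambda(y)$ and its $\lambda$-derivative be controlled uniformly in $\lambda$; this is exactly the purpose of conditions~(2) and~(3) of Definition~\ref{defgeneralfull}, and keeping the resulting error strictly below the transversality scale $\sim|x-y|^r$ is what forces $\varepsilon$ to be chosen small in terms of the constant produced by Lemma~\ref{independent}.
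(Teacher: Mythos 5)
Your proposal is correct and takes essentially the same approach as the paper: the paper likewise extends the family to maps $\widetilde F_{\tilde\lambda}=\bigl(F_\lambda,\Pi_{V^N_{\tilde\lambda}}\bigr):\mathbb R^n\to\mathbb R^{m+p}$, proves the smooth analogue of Proposition~\ref{dotbound} for this extended family by a Taylor expansion in which $D_{\tilde\lambda}G(\tilde\lambda)$ is a small perturbation of $(D_xF_\lambda(x)\oplus\Id)\bigl(D_{\tilde\lambda}\Pi_{\widetilde V_{\tilde\lambda}}(y-x)\bigr)$, with the errors controlled by conditions (2)--(3) and the restriction to pairs satisfying $|\widetilde F_{\tilde\lambda}(y)-\widetilde F_{\tilde\lambda}(x)|\le\tilde\delta|y-x|$, and then concludes via Lemma~\ref{lemma1}, \eqref{naturalbounds} and Fubini's theorem, with sharpness inherited from the projection case exactly as you say. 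The only cosmetic difference is your factorization $D_xF_\lambda(x)=B^x_\lambda\circ\Pi_{V^x_\lambda}$ in place of the paper's identity $D_xF_\lambda(x)(v)=D_xF_\lambda(x)\bigl(\Pi_{V^x_\lambda}(v)\bigr)$ with the product rule; note only that the term you describe as exactly ``killed'' is, for the actual pairs $x,y$ in question, merely small of order $C_1\tilde\delta|y-x|$ (since $y-x$ is only approximately in $\ker D_xF_\lambda(x)$), which is how the paper handles it.
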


\begin{proof} We proceed as in the proof of Theorem~\ref{projresult}.
The essential step is to define an extended family 
$\widetilde F_{\tilde\lambda}:\mathbb R^n\to\mathbb R^{m+p}$ for which
the assumptions of Lemma~\ref{lemma1} are valid. 

Let $x^0\in\mathbb R^n$ and $\lambda^0\in\Lambda$. Let $R'>0$ and 
$\varepsilon>0$ be sufficiently small.
We identify the range of $F_\lambda$ with 
$V_\lambda^{x^0}=\ker(D_xF_\lambda(x^0))^\perp$ such that 
$F_\lambda(x^0)=0\in V_\lambda^{x^0}$. As in the 
proof of Theorem~\ref{projresult}, the $k$-dimensional family of 
$m$-planes $\{V_\lambda^{x^0}\mid\lambda\in B(\lambda^0,R')\}$ is extended to a 
$\tilde k$-dimensional family of $(m+p)$-planes 
$\{\widetilde V_{\tilde\lambda}\mid\tilde\lambda\in\widetilde\Lambda\}$.
Denoting the orthogonal complement of $V_\lambda^{x_0}$ inside  
$\widetilde V_{\tilde\lambda}$ by $V_{\tilde\lambda}^N$ and identifying 
$\mathbb R^{m+p}$, $V_\lambda^{x^0}\times V_{\tilde\lambda}^N$ and
$\widetilde V_{\tilde\lambda}$ with each other, 
the extended family 
$\widetilde{\mathcal F}=\{\widetilde F_{\tilde\lambda}:\mathbb R^n
  \to\mathbb R^{m+p}\mid\tilde\lambda\in\widetilde\Lambda\}$ 
is defined by 
$\widetilde F_{\tilde\lambda}(y)=(F_\lambda(y),
  \Pi_{V_{\tilde\lambda}^N}(y))$.

It is enough to prove that the assumption \eqref{smallmeasure} is satisfied
for all 
$x,y\in B(x^0,\varepsilon)$ such that 
$|\widetilde F_{\tilde\lambda}(y)-\widetilde F_{\tilde\lambda}(x)|\le
  \tilde\delta|y-x|$ 
for some small $\tilde\delta$. Writing
$G(\tilde\lambda)=\widetilde F_{\tilde\lambda}(y)
    -\widetilde F_{\tilde\lambda}(x)$
and observing that $D_xF_\lambda(x)(v)=D_xF_\lambda(x)(\Pi_{V_\lambda^x}(v))$ for
any $v\in\mathbb R^n$, we have by Taylor's formula
\begin{align*}
D_{\tilde\lambda}G(\tilde\lambda)&=\Bigl(D_{\tilde\lambda}\bigl(D_xF_\lambda(x)
 (\Pi_{V_\lambda^x}(y-x))+\frac 12 D_x^2F_\lambda(\xi)(y-x)\bigr),D_{\tilde\lambda}
   \Pi_{V_{\tilde\lambda}^N}(y-x)\Bigr)\\
  &=\Bigl(D_xF_\lambda(x)\bigl(D_{\tilde\lambda}\Pi_{V_\lambda^{x^0}}(y-x)
   +D_{\tilde\lambda}\Pi_{V_\lambda^x}(y-x)
   -D_{\tilde\lambda}\Pi_{V_\lambda^{x^0}}(y-x)\bigr)\\
  &+D_{\tilde\lambda}D_xF_\lambda(x)(\Pi_{V_\lambda^x}(y-x))+\frac 12 
    D_{\tilde\lambda}D_x^2F_\lambda(\xi)(y-x),D_{\tilde\lambda}
    \Pi_{V_{\tilde\lambda}^N}(y-x)\Bigr)\\
  &=(D_xF_\lambda(x)\oplus\Id)(D_{\tilde\lambda}
    \Pi_{\widetilde V_{\tilde\lambda}}(y-x))\\
  &+\Bigl(D_xF_\lambda(x)\bigl(D_{\tilde\lambda}\Pi_{V_\lambda^x}(y-x)
   -D_{\tilde\lambda}\Pi_{V_\lambda^{x^0}}(y-x)\bigr)\\
  &+D_{\tilde\lambda}D_xF_\lambda(x)(\Pi_{V_\lambda^x}(y-x))
   +\frac 12 D_{\tilde\lambda}D_x^2F_\lambda(\xi)(y-x),0\Bigr).
\end{align*} 
Note that in the above sum the norm of the second term does not change 
when replacing 
$D_{\tilde\lambda}$ by $D_\lambda$. By Remark~\ref{genfull}, we have
$|\Pi_{V_\lambda^x}(y-x)|\le\tilde d\tilde\delta|y-x|$ for some $\tilde d$,
and therefore, by Definition~\ref{defgeneralfull},
the norm of the second term is less than $\tilde\varepsilon|y-x|$ where 
$\tilde\varepsilon=\tilde\varepsilon(\varepsilon,\tilde\delta)$ tends to zero 
as $\varepsilon$ and $\tilde\delta$ tend to zero. This, in turn, implies that 
$D_{\tilde\lambda}G(\tilde\lambda)$ is a small 
perturbation of a diffeomorphic image of  
$D_{\tilde\lambda}\Pi_{\widetilde V_{\tilde\lambda}}(y-x)$. According to the proof 
of Proposition~\ref{dotbound}, the singular values of 
$D_{\tilde\lambda}\Pi_{\widetilde V_{\tilde\lambda}}(y-x)$
are bounded from above and below when $\tilde\lambda$ is restricted to a 
suitable subspace. (The restriction is denoted by $T^{\lambda^1}$ in the proof 
of Proposition~\ref{dotbound}.) Thus the same is true for  
$D_{\tilde\lambda}G(\tilde\lambda)$, and from \cite[Lemma 3.1]{JJLL} we conclude
that a 
suitable restriction of $G$ is a diffeomorphism with uniform lower and upper 
bounds. Proceeding as in the 
proof of Proposition~\ref{dotbound}, we have for all $\delta>0$ and 
for all $x\ne y\in B(x^0,\varepsilon)$ 
\[
\mathcal L^k(\{\lambda\in B(\lambda^0,R)\mid |\widetilde F_{\tilde\lambda}(y)
  -\widetilde F_{\tilde\lambda}(x)|\le\delta\})
  \le C\delta^r|y-x|^{-r}.
\] 
The rest of the proof follows similarly as that of Theorem~\ref{projresult}.
\end{proof}

\begin{remark}\label{PSremark}
It is natural to consider whether the part of \cite[Theorem 7.3]{PS} concerning
the exceptional sets of projections is useful in our setting. For this purpose,
one needs to extend a $k$-dimensional family 
$\{F_\lambda:\mathbb R^n\to\mathbb R^m\mid\lambda\in\Lambda\}$ to a transversal
family 
$\{\widetilde F_{\tilde\lambda}:\mathbb R^n\to\mathbb R^m\mid\tilde\lambda\in
\widetilde\Lambda\}$
for which \cite[Theorem 7.3]{PS} may be applied.
Usually the extended parameter space $\widetilde\Lambda$ is 
$m(n-m)$-dimensional. For the extended family \cite[(7.4)]{PS} reads in our 
notation as follows
\begin{equation}\label{PSineq}
\dim\{\tilde\lambda\in\widetilde\Lambda\mid\dim(\widetilde F_{\tilde\lambda})_*
\mu\le\sigma\}\le m(n-m)+\sigma-\alpha
\end{equation}
where $I_\alpha(\mu)<\infty$. Inequality \eqref{PSineq} gives a lower bound
for $\mathcal L^k$-almost all $\lambda\in\Lambda$ provided that
$m(n-m)+\sigma-\alpha<k$. Recalling \eqref{energydim}, the best possible lower
bound obtained in this way is
\begin{equation}\label{PSbound}
\dim(F_\lambda)_*\mu\ge\dim\mu-(m(n-m)-k).
\end{equation}
The lower bound given by Theorem~\ref{generalresult} is better than 
\eqref{PSbound} except in the case where $k\ge (m-1)(n-m)$ and 
$\dim\mu\ge p(m-1)+m-1$. In this case \eqref{PSbound} equals the bound given
by Theorem~\ref{generalresult} but we assume less regularity from the family
than \cite[Theorem 7.3]{PS}. Similarly, our result gives a better bound than 
\cite[(7.6)]{PS} unless $k\ge l+m(n-m)-m$, which implies $p(l)=0$. 
\end{remark}


\begin{thebibliography}{JJLL}

\bibitem[B]{B} J. Bourgain, \emph{Besicovitch type maximal operators and 
  applications to Fourier analysis}, Geom. Funct. Anal. \textbf{1} (1991),
  147--187.

\bibitem[Fa1]{F1} K. Falconer, \emph{Continuity properties of $k$-plane 
  integrals and Besicovitch sets}, Math. Proc. Cambridge Philos. Soc. 
  \textbf{87} (1980), 221--226.

\bibitem[Fa2]{F2} K. Falconer, \emph{Sections of sets of zero Lebesgue measure},
  Mathematika \textbf{27} (1980), 90--96.

\bibitem[Fa3]{F3} K. Falconer, \emph{Techniques in Fractal Geometry}, John 
  Wiley \& Sons, Chichester, 1997.

\bibitem[Fe]{Fe} H. Federer, \emph{Geometric Measure Theory}, Springer, Berlin,
  1996. Reprint of the 1969 edition.

\bibitem[JJL]{JJL} E. J\"arvenp\"a\"a, M. J\"arvenp\"a\"a and M. Leikas,
  \emph{(Non)regularity of projections of measures invariant under geodesic 
  flow}, Comm. Math. Phys. \textbf{254} (2005), 695--717.

\bibitem [JJLL]{JJLL} E. J\"arvenp\"a\"a, M. J\"arvenp\"a\"a, F. Ledrappier
   and M. Leikas, \emph{One-dimensional families of projections},
   Nonlinearity. \textbf{21} (2008), 453--463.

\bibitem[LL]{LL} F. Ledrappier and E. Lindenstrauss,
  \emph{On the projections of measures invariant under the geodesic flow},
  IMRN, \textbf{9} (2003), 511--526.

\bibitem[Mar]{Mar} M. Marstrand, \emph{Some fundamental geometrical 
   properties of plane sets of fractional dimension}, Proc. London Math. Soc.
  \textbf{4} (1954), 257--302.

\bibitem[Mat1]{Mat1} P. Mattila, \emph{Hausdorff dimension, orthogonal 
   projections and intersections with planes}, Ann. Acad. Sci. Fenn. Math. 
   \textbf{1} (1975), 227--44.
  
\bibitem[Mat2]{Mat2} P. Mattila, \emph{Geometry of Sets and Measures in
  Euclidean Spaces: Fractals and rectifiability},
  Cambridge University Press, Cambridge, 1995.

\bibitem[Mat3]{Mat3} P. Mattila, \emph{Hausdorff dimension, projections, and 
  the Fourier transform}, Publ. Mat.  \textbf{48} (2004), 3--48.

\bibitem[O]{O} R. Oberlin, \emph{Bounds for Kakeya-type maximal operators 
  associated with $k$-planes},  Math. Res. Lett.  \textbf{14} (2007), 87--97.

\bibitem [PS]{PS} Y. Peres and W. Schlag, \emph{Smoothness of projections,
  Bernoulli convolutions, and the dimensions of exceptions}, Duke Math. J.
  (2) \textbf{102} (2000), 193--251.


\end{thebibliography}
\end{document}